\newtheorem{theorem}{Theorem}
\newtheorem{axiom}[theorem]{Axiom}
\newtheorem{conjecture}[theorem]{Conjecture}
\newtheorem{corollary}[theorem]{Corollary}
\newtheorem{definition}[theorem]{Definition}
\newtheorem{example}[theorem]{Example}
\newtheorem{exercise}[theorem]{Exercise}
\newtheorem{lemma}[theorem]{Lemma}
\newtheorem{proposition}[theorem]{Proposition}
\newtheorem{remark}[theorem]{Remark}
\newenvironment{proof}[1][Proof]{\noindent\textbf{#1.} }{\ \rule{0.5em}{0.5em}}
\newdimen\dummy
\chardef\@x10\chardef\@xv60
\def\tcitime{
\def\@time{%
  \@minute\time\@hour\@minute\divide\@hour\@xv
  \ifnum\@hour<\@x 0\fi\the\@hour:%
  \multiply\@hour\@xv\advance\@minute-\@hour
  \ifnum\@minute<\@x 0\fi\the\@minute
  }}%
\def\x@hyperref#1#2#3{%
   \catcode`\~ = 12
   \catcode`\$ = 12
   \catcode`\_ = 12
   \catcode`\# = 12
   \catcode`\& = 12
   \y@hyperref{#1}{#2}{#3}%
}
\def\y@hyperref#1#2#3#4{%
   #2\ref{#4}#3
   \catcode`\~ = 13
   \catcode`\$ = 3
   \catcode`\_ = 8
   \catcode`\# = 6
   \catcode`\& = 4
}
\def\QCTOpt[#1]#2{%
  \def\QCTOptB{#1}
  \def\QCTOptA{#2}
}
\def\QCTNOpt#1{%
  \def\QCTOptA{#1}
  \let\QCTOptB\empty
}
\def\Qct{%
  \@ifnextchar[{%
    \QCTOpt}{\QCTNOpt}
}
\def\QCBOpt[#1]#2{%
  \def\QCBOptB{#1}%
  \def\QCBOptA{#2}%
}
\def\QCBNOpt#1{%
  \def\QCBOptA{#1}%
  \let\QCBOptB\empty
}
\def\Qcb{%
  \@ifnextchar[{%
    \QCBOpt}{\QCBNOpt}%
}
\def\PrepCapArgs{%
  \ifx\QCBOptA\empty
    \ifx\QCTOptA\empty
      {}%
    \else
      \ifx\QCTOptB\empty
        {\QCTOptA}%
      \else
        [\QCTOptB]{\QCTOptA}%
      \fi
    \fi
  \else
    \ifx\QCBOptA\empty
      {}%
    \else
      \ifx\QCBOptB\empty
        {\QCBOptA}%
      \else
        [\QCBOptB]{\QCBOptA}%
      \fi
    \fi
  \fi
}
\def\GRAPHICSPS#1{%
 \ifcase\GRAPHICSTYPE
   \special{ps: #1}%
 \or
   \special{language "PS", include "#1"}%
 \fi
}%
\def\graffile#1#2#3#4{%
    \bgroup
       \@inlabelfalse
       \leavevmode
       \@ifundefined{bbl@deactivate}{\def~{\string~}}{\activesoff}%
        \raise -#4 \BOXTHEFRAME{%
           \hbox to #2{\raise #3\hbox to #2{\null #1\hfil}}}%
    \egroup
}%
\def\draftbox#1#2#3#4{%
 \leavevmode\raise -#4 \hbox{%
  \frame{\rlap{\protect\tiny #1}\hbox to #2%
   {\vrule height#3 width\z@ depth\z@\hfil}%
  }%
 }%
}%
\let\nographics=\@msidraft
\newif\ifwasdraft
\def\GRAPHIC#1#2#3#4#5{%
   \ifnum\@msidraft=\@ne\draftbox{#2}{#3}{#4}{#5}%
   \else\graffile{#1}{#3}{#4}{#5}%
   \fi
}
\def\addtoLaTeXparams#1{%
    \edef\LaTeXparams{\LaTeXparams #1}}%
\newif\ifBoxFrame \BoxFramefalse
\newif\ifOverFrame \OverFramefalse
\newif\ifUnderFrame \UnderFramefalse
\def\BOXTHEFRAME#1{%
   \hbox{%
      \ifBoxFrame
         \frame{#1}%
      \else
         {#1}%
      \fi
   }%
}
\def\doFRAMEparams#1{\BoxFramefalse\OverFramefalse\UnderFramefalse\readFRAMEparams#1\end}%
\def\readFRAMEparams#1{%
 \ifx#1\end%
  \let\next=\relax
  \else
  \ifx#1i\dispkind=\z@\fi
  \ifx#1d\dispkind=\@ne\fi
  \ifx#1f\dispkind=\tw@\fi
  \ifx#1t\addtoLaTeXparams{t}\fi
  \ifx#1b\addtoLaTeXparams{b}\fi
  \ifx#1p\addtoLaTeXparams{p}\fi
  \ifx#1h\addtoLaTeXparams{h}\fi
  \ifx#1X\BoxFrametrue\fi
  \ifx#1O\OverFrametrue\fi
  \ifx#1U\UnderFrametrue\fi
  \ifx#1w
    \ifnum\@msidraft=1\wasdrafttrue\else\wasdraftfalse\fi
    \@msidraft=\@ne
  \fi
  \let\next=\readFRAMEparams
  \fi
 \next
 }%
\def\IFRAME#1#2#3#4#5#6{%
      \bgroup
      \let\QCTOptA\empty
      \let\QCTOptB\empty
      \let\QCBOptA\empty
      \let\QCBOptB\empty
      #6%
      \parindent=0pt
      \leftskip=0pt
      \rightskip=0pt
      \setbox0=\hbox{\QCBOptA}%
      \@tempdima=#1\relax
      \ifOverFrame
          \typeout{This is not implemented yet}%
          \show\HELP
      \else
         \ifdim\wd0>\@tempdima
            \advance\@tempdima by \@tempdima
            \ifdim\wd0 >\@tempdima
               \setbox1 =\vbox{%
                  \unskip\hbox to \@tempdima{\hfill\GRAPHIC{#5}{#4}{#1}{#2}{#3}\hfill}%
                  \unskip\hbox to \@tempdima{\parbox[b]{\@tempdima}{\QCBOptA}}%
               }%
               \wd1=\@tempdima
            \else
               \textwidth=\wd0
               \setbox1 =\vbox{%
                 \noindent\hbox to \wd0{\hfill\GRAPHIC{#5}{#4}{#1}{#2}{#3}\hfill}\\%
                 \noindent\hbox{\QCBOptA}%
               }%
               \wd1=\wd0
            \fi
         \else
            \ifdim\wd0>0pt
              \hsize=\@tempdima
              \setbox1=\vbox{%
                \unskip\GRAPHIC{#5}{#4}{#1}{#2}{0pt}%
                \break
                \unskip\hbox to \@tempdima{\hfill \QCBOptA\hfill}%
              }%
              \wd1=\@tempdima
           \else
              \hsize=\@tempdima
              \setbox1=\vbox{%
                \unskip\GRAPHIC{#5}{#4}{#1}{#2}{0pt}%
              }%
              \wd1=\@tempdima
           \fi
         \fi
         \@tempdimb=\ht1
         \advance\@tempdimb by -#2
         \advance\@tempdimb by #3
         \leavevmode
         \raise -\@tempdimb \hbox{\box1}%
      \fi
      \egroup%
}%
\def\DFRAME#1#2#3#4#5{%
  \hfil\break
  \bgroup
     \leftskip\@flushglue
     \rightskip\@flushglue
     \parindent\z@
     \parfillskip\z@skip
     \let\QCTOptA\empty
     \let\QCTOptB\empty
     \let\QCBOptA\empty
     \let\QCBOptB\empty
     \vbox\bgroup
        \ifOverFrame
           #5\QCTOptA\par
        \fi
        \GRAPHIC{#4}{#3}{#1}{#2}{\z@}%
        \ifUnderFrame
           \break#5\QCBOptA
        \fi
     \egroup
   \egroup
   \break
}%
\def\FFRAME#1#2#3#4#5#6#7{%
  \@ifundefined{floatstyle}
    {
     \begin{figure}[#1]%
    }
    {
     \ifx#1h
      \begin{figure}[H]%
     \else
      \begin{figure}[#1]%
     \fi
    }
  \let\QCTOptA\empty
  \let\QCTOptB\empty
  \let\QCBOptA\empty
  \let\QCBOptB\empty
  \ifOverFrame
    #4
    \ifx\QCTOptA\empty
    \else
      \ifx\QCTOptB\empty
        \caption{\QCTOptA}%
      \else
        \caption[\QCTOptB]{\QCTOptA}%
      \fi
    \fi
    \ifUnderFrame\else
      \label{#5}%
    \fi
  \else
    \UnderFrametrue%
  \fi
  \begin{center}\GRAPHIC{#7}{#6}{#2}{#3}{\z@}\end{center}%
  \ifUnderFrame
    #4
    \ifx\QCBOptA\empty
      \caption{}%
    \else
      \ifx\QCBOptB\empty
        \caption{\QCBOptA}%
      \else
        \caption[\QCBOptB]{\QCBOptA}%
      \fi
    \fi
    \label{#5}%
  \fi
  \end{figure}%
 }%
\def\makeactives{
  \catcode`\"=\active
  \catcode`\;=\active
  \catcode`\:=\active
  \catcode`\'=\active
  \catcode`\~=\active
}
   \gdef\activesoff{%
      \def"{\string"}%
      \def;{\string;}%
      \def:{\string:}%
      \def'{\string'}%
      \def~{\string~}%
    }
\def\FRAME#1#2#3#4#5#6#7#8{%
 \bgroup
 \ifnum\@msidraft=\@ne
   \wasdrafttrue
 \else
   \wasdraftfalse%
 \fi
 \def\LaTeXparams{}%
 \dispkind=\z@
 \def\LaTeXparams{}%
 \doFRAMEparams{#1}%
 \ifnum\dispkind=\z@\IFRAME{#2}{#3}{#4}{#7}{#8}{#5}\else
  \ifnum\dispkind=\@ne\DFRAME{#2}{#3}{#7}{#8}{#5}\else
   \ifnum\dispkind=\tw@
    \edef\@tempa{\noexpand\FFRAME{\LaTeXparams}}%
    \@tempa{#2}{#3}{#5}{#6}{#7}{#8}%
    \fi
   \fi
  \fi
  \ifwasdraft\@msidraft=1\else\@msidraft=0\fi{}%
  \egroup
 }%
\def\TEXUX#1{"texux"}
\long\def\QQQ#1#2{%
     \long\expandafter\def\csname#1\endcsname{#2}}%
\long\def\QQA#1#2{}%
\def\QTR#1#2{{\csname#1\endcsname {#2}}}%
\def\EXPAND#1[#2]#3{}%
\def\NOEXPAND#1[#2]#3{}%
\def\LaTeXparent#1{}%
\def\ChildStyles#1{}%
\def\ChildDefaults#1{}%
\def\QTagDef#1#2#3{}%
  \providecommand{\UNICODE}[2][]{\protect\rule{.1in}{.1in}}
  \providecommand{\U}[1]{\protect\rule{.1in}{.1in}}
\def\QQfnmark#1{\footnotemark}
 \def\abstract{%
  \if@twocolumn
   \section*{Abstract (Not appropriate in this style!)}%
   \else \small
   \begin{center}{\bf Abstract\vspace{-.5em}\vspace{\z@}}\end{center}%
   \quotation
   \fi
  }%
   \def\registered{\relax\ifmmode{}\r@gistered
                    \else$\m@th\r@gistered$\fi}%
 \def\r@gistered{^{\ooalign
  {\hfil\raise.07ex\hbox{$\scriptstyle\rm\text{R}$}\hfil\crcr
  \mathhexbox20D}}}}{}%
\newdimen\theight
\def\newfmtname{LaTeX2e}
  \DeclareOldFontCommand{\rm}{\normalfont\rmfamily}{\mathrm}
  \DeclareOldFontCommand{\sf}{\normalfont\sffamily}{\mathsf}
  \DeclareOldFontCommand{\tt}{\normalfont\ttfamily}{\mathtt}
  \DeclareOldFontCommand{\bf}{\normalfont\bfseries}{\mathbf}
  \DeclareOldFontCommand{\it}{\normalfont\itshape}{\mathit}
  \DeclareOldFontCommand{\sl}{\normalfont\slshape}{\@nomath\sl}
  \DeclareOldFontCommand{\sc}{\normalfont\scshape}{\@nomath\sc}
\def\alpha{{\Greekmath 010B}}%
\def\beta{{\Greekmath 010C}}%
\def\gamma{{\Greekmath 010D}}%
\def\delta{{\Greekmath 010E}}%
\def\epsilon{{\Greekmath 010F}}%
\def\zeta{{\Greekmath 0110}}%
\def\eta{{\Greekmath 0111}}%
\def\theta{{\Greekmath 0112}}%
\def\iota{{\Greekmath 0113}}%
\def\kappa{{\Greekmath 0114}}%
\def\lambda{{\Greekmath 0115}}%
\def\mu{{\Greekmath 0116}}%
\def\nu{{\Greekmath 0117}}%
\def\xi{{\Greekmath 0118}}%
\def\pi{{\Greekmath 0119}}%
\def\rho{{\Greekmath 011A}}%
\def\sigma{{\Greekmath 011B}}%
\def\tau{{\Greekmath 011C}}%
\def\upsilon{{\Greekmath 011D}}%
\def\phi{{\Greekmath 011E}}%
\def\chi{{\Greekmath 011F}}%
\def\psi{{\Greekmath 0120}}%
\def\omega{{\Greekmath 0121}}%
\def\varepsilon{{\Greekmath 0122}}%
\def\vartheta{{\Greekmath 0123}}%
\def\varpi{{\Greekmath 0124}}%
\def\varrho{{\Greekmath 0125}}%
\def\varsigma{{\Greekmath 0126}}%
\def\varphi{{\Greekmath 0127}}%
\def\nabla{{\Greekmath 0272}}
\def\FindBoldGroup{%
   {\setbox0=\hbox{$\mathbf{x\global\edef\theboldgroup{\the\mathgroup}}$}}%
}
\def\Greekmath#1#2#3#4{%
    \if@compatibility
        \ifnum\mathgroup=\symbold
           \mathchoice{\mbox{\boldmath$\displaystyle\mathchar"#1#2#3#4$}}%
                      {\mbox{\boldmath$\textstyle\mathchar"#1#2#3#4$}}%
                      {\mbox{\boldmath$\scriptstyle\mathchar"#1#2#3#4$}}%
                      {\mbox{\boldmath$\scriptscriptstyle\mathchar"#1#2#3#4$}}%
        \else
           \mathchar"#1#2#3#4%
        \fi
    \else
        \FindBoldGroup
        \ifnum\mathgroup=\theboldgroup 
           \mathchoice{\mbox{\boldmath$\displaystyle\mathchar"#1#2#3#4$}}%
                      {\mbox{\boldmath$\textstyle\mathchar"#1#2#3#4$}}%
                      {\mbox{\boldmath$\scriptstyle\mathchar"#1#2#3#4$}}%
                      {\mbox{\boldmath$\scriptscriptstyle\mathchar"#1#2#3#4$}}%
        \else
           \mathchar"#1#2#3#4%
        \fi
      \fi}
\newif\ifGreekBold  \GreekBoldfalse
\let\SAVEPBF=\pbf
\def\pbf{\GreekBoldtrue\SAVEPBF}%
  \newcounter{equationnumber}
  \def\mathletters{%
     \addtocounter{equation}{1}
     \edef\@currentlabel{\theequation}%
     \setcounter{equationnumber}{\c@equation}
     \setcounter{equation}{0}%
     \edef\theequation{\@currentlabel\noexpand\alph{equation}}%
  }
    \def\BibTeX{{\rm B\kern-.05em{\sc i\kern-.025em b}\kern-.08em
                 T\kern-.1667em\lower.7ex\hbox{E}\kern-.125emX}}}{}%
\def\AmS{{\protect\usefont{OMS}{cmsy}{m}{n}%
                A\kern-.1667em\lower.5ex\hbox{M}\kern-.125emS}}}{}%
\def\@@eqncr{\let\@tempa\relax
    \ifcase\@eqcnt \def\@tempa{& & &}\or \def\@tempa{& &}%
      \else \def\@tempa{&}\fi
     \@tempa
     \if@eqnsw
        \iftag@
           \@taggnum
        \else
           \@eqnnum\stepcounter{equation}%
        \fi
     \fi
     \global\tag@false
     \global\@eqnswtrue
     \global\@eqcnt\z@\cr}
\def\TCItag{\@ifnextchar*{\@TCItagstar}{\@TCItag}}
\def\@TCItag#1{%
    \global\tag@true
    \global\def\@taggnum{(#1)}}
\def\@TCItagstar*#1{%
    \global\tag@true
    \global\def\@taggnum{#1}}
\def\ExitTCILatex{\makeatother }
\let\DOTSI\relax
\def\RIfM@{\relax\ifmmode}%
\def\FN@{\futurelet\next}%
\def\iint{\DOTSI\intno@\tw@\FN@\ints@}%
\def\iiint{\DOTSI\intno@\thr@@\FN@\ints@}%
\def\iiiint{\DOTSI\intno@4 \FN@\ints@}%
\def\idotsint{\DOTSI\intno@\z@\FN@\ints@}%
\def\ints@{\findlimits@\ints@@}%
\newif\iflimtoken@
\newif\iflimits@
\def\findlimits@{\limtoken@true\ifx\next\limits\limits@true
 \else\ifx\next\nolimits\limits@false\else
 \limtoken@false\ifx\ilimits@\nolimits\limits@false\else
 \ifinner\limits@false\else\limits@true\fi\fi\fi\fi}%
\def\multint@{\int\ifnum\intno@=\z@\intdots@                          
 \else\intkern@\fi                                                    
 \ifnum\intno@>\tw@\int\intkern@\fi                                   
 \ifnum\intno@>\thr@@\int\intkern@\fi                                 
 \int}
\def\multintlimits@{\intop\ifnum\intno@=\z@\intdots@\else\intkern@\fi
 \ifnum\intno@>\tw@\intop\intkern@\fi
 \ifnum\intno@>\thr@@\intop\intkern@\fi\intop}%
\def\intic@{%
    \mathchoice{\hskip.5em}{\hskip.4em}{\hskip.4em}{\hskip.4em}}%
\def\negintic@{\mathchoice
 {\hskip-.5em}{\hskip-.4em}{\hskip-.4em}{\hskip-.4em}}%
\def\ints@@{\iflimtoken@                                              
 \def\ints@@@{\iflimits@\negintic@
   \mathop{\intic@\multintlimits@}\limits                             
  \else\multint@\nolimits\fi                                          
  \eat@}
 \else                                                                
 \def\ints@@@{\iflimits@\negintic@
  \mathop{\intic@\multintlimits@}\limits\else
  \multint@\nolimits\fi}\fi\ints@@@}%
\def\intkern@{\mathchoice{\!\!\!}{\!\!}{\!\!}{\!\!}}%
\def\plaincdots@{\mathinner{\cdotp\cdotp\cdotp}}%
\def\intdots@{\mathchoice{\plaincdots@}%
 {{\cdotp}\mkern1.5mu{\cdotp}\mkern1.5mu{\cdotp}}%
 {{\cdotp}\mkern1mu{\cdotp}\mkern1mu{\cdotp}}%
 {{\cdotp}\mkern1mu{\cdotp}\mkern1mu{\cdotp}}}%
\def\RIfM@{\relax\protect\ifmmode}
\def\text{\RIfM@\expandafter\text@\else\expandafter\mbox\fi}
\let\nfss@text\text
\def\text@#1{\mathchoice
   {\textdef@\displaystyle\f@size{#1}}%
   {\textdef@\textstyle\tf@size{\firstchoice@false #1}}%
   {\textdef@\textstyle\sf@size{\firstchoice@false #1}}%
   {\textdef@\textstyle \ssf@size{\firstchoice@false #1}}%
   \glb@settings}
\def\textdef@#1#2#3{\hbox{{%
                    \everymath{#1}%
                    \let\f@size#2\selectfont
                    #3}}}
\newif\iffirstchoice@
\def\Let@{\relax\iffalse{\fi\let\\=\cr\iffalse}\fi}%
\def\vspace@{\def\vspace##1{\crcr\noalign{\vskip##1\relax}}}%
\def\multilimits@{\bgroup\vspace@\Let@
 \baselineskip\fontdimen10 \scriptfont\tw@
 \advance\baselineskip\fontdimen12 \scriptfont\tw@
 \lineskip\thr@@\fontdimen8 \scriptfont\thr@@
 \lineskiplimit\lineskip
 \vbox\bgroup\ialign\bgroup\hfil$\m@th\scriptstyle{##}$\hfil\crcr}%
\def\Sb{_\multilimits@}%
\def\endSb{\crcr\egroup\egroup\egroup}%
\def\Sp{^\multilimits@}%
\newdimen\ex@
\def\rightarrowfill@#1{$#1\m@th\mathord-\mkern-6mu\cleaders
 \hbox{$#1\mkern-2mu\mathord-\mkern-2mu$}\hfill
 \mkern-6mu\mathord\rightarrow$}%
\def\leftarrowfill@#1{$#1\m@th\mathord\leftarrow\mkern-6mu\cleaders
 \hbox{$#1\mkern-2mu\mathord-\mkern-2mu$}\hfill\mkern-6mu\mathord-$}%
\def\leftrightarrowfill@#1{$#1\m@th\mathord\leftarrow
\mkern-6mu\cleaders
 \hbox{$#1\mkern-2mu\mathord-\mkern-2mu$}\hfill
 \mkern-6mu\mathord\rightarrow$}%
\def\overrightarrow{\mathpalette\overrightarrow@}%
\def\overrightarrow@#1#2{\vbox{\ialign{##\crcr\rightarrowfill@#1\crcr
 \noalign{\kern-\ex@\nointerlineskip}$\m@th\hfil#1#2\hfil$\crcr}}}%
\def\overleftarrow{\mathpalette\overleftarrow@}%
\def\overleftarrow@#1#2{\vbox{\ialign{##\crcr\leftarrowfill@#1\crcr
 \noalign{\kern-\ex@\nointerlineskip}$\m@th\hfil#1#2\hfil$\crcr}}}%
\def\overleftrightarrow{\mathpalette\overleftrightarrow@}%
\def\overleftrightarrow@#1#2{\vbox{\ialign{##\crcr
   \leftrightarrowfill@#1\crcr
 \noalign{\kern-\ex@\nointerlineskip}$\m@th\hfil#1#2\hfil$\crcr}}}%
\def\underrightarrow{\mathpalette\underrightarrow@}%
\def\underrightarrow@#1#2{\vtop{\ialign{##\crcr$\m@th\hfil#1#2\hfil
  $\crcr\noalign{\nointerlineskip}\rightarrowfill@#1\crcr}}}%
\def\underleftarrow{\mathpalette\underleftarrow@}%
\def\underleftarrow@#1#2{\vtop{\ialign{##\crcr$\m@th\hfil#1#2\hfil
  $\crcr\noalign{\nointerlineskip}\leftarrowfill@#1\crcr}}}%
\def\underleftrightarrow{\mathpalette\underleftrightarrow@}%
\def\underleftrightarrow@#1#2{\vtop{\ialign{##\crcr$\m@th
  \hfil#1#2\hfil$\crcr
 \noalign{\nointerlineskip}\leftrightarrowfill@#1\crcr}}}%
\def\qopnamewl@#1{\mathop{\operator@font#1}\nlimits@}
\let\nlimits@\displaylimits
\def\setboxz@h{\setbox\z@\hbox}
\def\varlim@#1#2{\mathop{\vtop{\ialign{##\crcr
 \hfil$#1\m@th\operator@font lim$\hfil\crcr
 \noalign{\nointerlineskip}#2#1\crcr
 \noalign{\nointerlineskip\kern-\ex@}\crcr}}}}
 \def\rightarrowfill@#1{\m@th\setboxz@h{$#1-$}\ht\z@\z@
  $#1\copy\z@\mkern-6mu\cleaders
  \hbox{$#1\mkern-2mu\box\z@\mkern-2mu$}\hfill
  \mkern-6mu\mathord\rightarrow$}
\def\leftarrowfill@#1{\m@th\setboxz@h{$#1-$}\ht\z@\z@
  $#1\mathord\leftarrow\mkern-6mu\cleaders
  \hbox{$#1\mkern-2mu\copy\z@\mkern-2mu$}\hfill
  \mkern-6mu\box\z@$}
\def\projlim{\qopnamewl@{proj\,lim}}
\def\injlim{\qopnamewl@{inj\,lim}}
\def\varinjlim{\mathpalette\varlim@\rightarrowfill@}
\def\varprojlim{\mathpalette\varlim@\leftarrowfill@}
\def\varliminf{\mathpalette\varliminf@{}}
\def\varliminf@#1{\mathop{\underline{\vrule\@depth.2\ex@\@width\z@
   \hbox{$#1\m@th\operator@font lim$}}}}
\def\varlimsup{\mathpalette\varlimsup@{}}
\def\varlimsup@#1{\mathop{\overline
  {\hbox{$#1\m@th\operator@font lim$}}}}
\def\align{\@verbatim \frenchspacing\@vobeyspaces \@alignverbatim
You are using the "align" environment in a style in which it is not defined.}
\let\csname endalign*\endcsname =\endtrivlist
\def\alignat{\@verbatim \frenchspacing\@vobeyspaces \@alignatverbatim
You are using the "alignat" environment in a style in which it is not defined.}
\let\csname endalignat*\endcsname =\endtrivlist
\def\xalignat{\@verbatim \frenchspacing\@vobeyspaces \@xalignatverbatim
You are using the "xalignat" environment in a style in which it is not defined.}
\let\csname endxalignat*\endcsname =\endtrivlist
\def\gather{\@verbatim \frenchspacing\@vobeyspaces \@gatherverbatim
You are using the "gather" environment in a style in which it is not defined.}
\let\csname endgather*\endcsname =\endtrivlist
\def\multiline{\@verbatim \frenchspacing\@vobeyspaces \@multilineverbatim
You are using the "multiline" environment in a style in which it is not defined.}
\let\csname endmultiline*\endcsname =\endtrivlist
\def\arrax{\@verbatim \frenchspacing\@vobeyspaces \@arraxverbatim
You are using a type of "array" construct that is only allowed in AmS-LaTeX.}
\def\tabulax{\@verbatim \frenchspacing\@vobeyspaces \@tabulaxverbatim
You are using a type of "tabular" construct that is only allowed in AmS-LaTeX.}
\let\csname endarrax*\endcsname =\endtrivlist
\let\csname endtabulax*\endcsname =\endtrivlist
 \def\endequation{%
     \ifmmode\ifinner 
      \iftag@
        \addtocounter{equation}{-1} 
        $\hfil
           \displaywidth\linewidth\@taggnum\egroup \endtrivlist
        \global\tag@false
        \global\@ignoretrue
      \else
        $\hfil
           \displaywidth\linewidth\@eqnnum\egroup \endtrivlist
        \global\tag@false
        \global\@ignoretrue
      \fi
     \else
      \iftag@
        \addtocounter{equation}{-1} 
        \eqno \hbox{\@taggnum}
        \global\tag@false%
        $$\global\@ignoretrue
      \else
        \eqno \hbox{\@eqnnum}
        $$\global\@ignoretrue
      \fi
     \fi\fi
 }
 \newif\iftag@ \tag@false
 \def\TCItag{\@ifnextchar*{\@TCItagstar}{\@TCItag}}
 \def\@TCItag#1{%
     \global\tag@true
     \global\def\@taggnum{(#1)}}
 \def\@TCItagstar*#1{%
     \global\tag@true
     \global\def\@taggnum{#1}}
     \def\tag{\@ifnextchar*{\@tagstar}{\@tag}}
     \def\@tag#1{%
         \global\tag@true
         \global\def\@taggnum{(#1)}}
     \def\@tagstar*#1{%
         \global\tag@true
         \global\def\@taggnum{#1}}
\begin{document}

\title{Marginal density expansions for diffusions and stochastic volatility,
part II:\ Applications}
\author{J.D. Deuschel, P.K. Friz, A. Jacquier, S. Violante \\
TU Berlin, TU and WIAS\ Berlin, TU\ Berlin, Imperial College}
\maketitle

\begin{abstract}
In \cite{DFJVpartI} we discussed density expansions for multidimensional
diffusions $\left( X^{1},\dots ,X^{d}\right) $, at fixed time $T$ and
projected to their first $l$ coordinates, in the small noise regime. Global
conditions were found which replace the well-known "not-in-cutlocus"
condition known from heat-kernel asymptotics. In the present paper we
discuss financial applications; these include tail and implied volatility
asymptotics in some correlated stochastic volatility models. In particular,
we solve a problem left open by A. Gulisashvili and E.M.\ Stein (2009).

\textbf{Keywords:} Density expansions in small noise and small time,
sub-Riemannian geometry with drift, focal points, stochastic volatility,
implied volatility, large strike and small time asymptotics for implied
volatility
\end{abstract}

\section{Introduction}

Given a multi-dimensional diffusion process $\mathrm{X}_{t}=\left(
X_{t}^{1},\dots ,X_{t}^{d}:t\geq 0\right) $, started at $\mathrm{X}_{0}=%
\mathrm{x}_{0}$, we studied in \cite{DFJVpartI} the behaviour of the
probability density function $f=f\left( \mathrm{y}\,,t\right) $ of the
projected (in general non-Markovian) process%
\begin{equation*}
\mathrm{Y}_{t}:=\Pi _{l}\circ \mathrm{X}_{t}:=\left( X_{t}^{1},\dots
,X_{t}^{l}\right) 
\end{equation*}%
with $l\in \left\{ 1,\dots ,d\right\} $ fixed. This situation is typical in
analysis of stochastic volatility models; $\mathrm{Y}$ may represent one (or
several!)\ assets, the full process $\mathrm{X}$ contains additional
stochastic volatility components (and also stochastic rates, if desired).
Basket models, in the spirit of \cite{A1, A2} can also be fitted in this
framework. Both short time asymptotics and tail asymptotics, in presence of
suitable scaling properties of the model, can be derived from the small
noise problem 
\begin{equation*}
d\mathrm{X}_{t}^{\varepsilon }=b\left( \varepsilon ,\mathrm{X}%
_{t}^{\varepsilon }\right) dt+\varepsilon \sigma \left( \mathrm{X}%
_{t}^{\varepsilon }\right) dW_{t},\quad \text{with }\mathrm{X}%
_{0}^{\varepsilon }=\mathrm{x}_{0}^{\varepsilon }\in \mathbb{R}^{d},
\end{equation*}%
where $W$ is a $m$-dimensional standard Brownian motion. The main technical
result in \cite{DFJVpartI} is a density expansion for $\mathrm{Y}%
_{t}^{\varepsilon }:=\Pi _{l}\circ \mathrm{X}_{t}^{\varepsilon }$ of the
form, for $\mathrm{x}_{0},\mathrm{y},T$ fixed, 
\begin{equation}
f^{\varepsilon }\left( \mathrm{y},T\right) =e^{-c_{1}/\varepsilon
^{2}}e^{c_{2}/\varepsilon }\varepsilon ^{-l}\left( c_{0}+O\left( \varepsilon
\right) \right) \text{ as }\varepsilon \downarrow 0.  \label{feps_intronew}
\end{equation}%
One of our main motivations, for  \cite{DFJVpartI} and the present paper, comes from the recent work on asset price
density expansions by A. Gulisashvili and E.M. Stein: in \cite[Theorem 2.1]{GuSt} they
consider the \textit{uncorrelated} Stein--Stein stochastic volatility
model.\ In the appropriate pricing measure the dynamics are\footnote{%
Sometimes the Stein--Stein model is written with $\left\vert Z\right\vert
dW^{1}$ rather than $ZdW^{1}$; in the uncorrelated case this does not make
a difference to the law of the process, as is immediate from a look at the respective generators.
There is a recent tendency in the finance community to use the form $ZdW^{1}$ which we analyze here, 
cf. \cite{LiSe, Li12}, this version of the model was also proposed by Sch\"obel--Zhu, \cite{SZ}.}
\begin{equation*}
dS/S=ZdW^{1},\,dZ=\left( a+bZ\right) dt+cdW^{2}\text{,}
\end{equation*}%
with parameters, $a\geq 0,b\leq 0,c>0$, spot-volatility $Z_{0}=\sigma
_{0}\geq 0$ and \textit{correlation }$\rho :=d\,\left\langle
W^{1},W^{2}\right\rangle /dt$ \textit{equal to zero}; spot $S_{0}$ can 
normalized to unit. Their main result is that $S_{T}$, for fixed $T>0$,
admits a probability density function $f=f\left( s\right) $ such that%
\footnote{%
Strictly speaking, the $O$-term given in \cite{GuSt} is $\log s$ with power $-1/4$; the authors
have informed us, however, that a closer look at their argument indeed gives
power $-1/2$.} 
\begin{equation}
f\left( s\right) = s^{-B_{1}}e^{B_{2}\sqrt{\log s}}\left( \log s\right)
^{-\frac{1}{2}}\left( B_0+O\left( \log s\right) ^{-\frac{1}{2}}\right) \text{
as }s\uparrow \infty   \label{ExpSS}
\end{equation}%
with explicitly computable constants; asymptotic formulae of the implied
volatility in the large strike regime are then obtained as (nowadays
mechanical; cf. Lee \cite{Lee11} and the references therein) corollaries.\
Indeed, one has\footnote{%
Small strike asymptotics are similar and will not be discussed here.} 
\begin{eqnarray}
\sigma _{BS}\left( k,T\right) ^{2}T &=&\left( \beta _{1}\sqrt{k}+\beta
_{2}+o\left( 1\right) \right) ^{2}\text{ as log-strike }k\rightarrow \infty ;
\label{ImplVolExp} \\
\beta _{1} &=&\beta _{1}\left( B_{1}\right) =\sqrt{2}\left( \sqrt{B_{1}-1}-%
\sqrt{B_{1}-2}\right) ,\text{ }  \notag \\
\beta _{2} &=&\beta _{2}\left( B_{1},B_{2}\right) =\frac{B_{2}}{\sqrt{2}}%
\left( \frac{1}{\sqrt{B_{1}-2}}-\frac{1}{\sqrt{B_{1}-1}}\right) .  \notag
\end{eqnarray}%
The proof of \cite[Theorem 2.1]{GuSt} relies on the so-called Hull-White
formula which states that - \textit{under the crucial assumption of zero
correlation} - an option price in a stochastic volatility models is
effectively a weighted average of Black--Scholes option prices (at different
volatiliy levels). The \textit{correlated case} was left as open problem in 
\cite[Theorem 2.1]{GuSt} and indeed the importance of allowing for
correlation in stochastic volatility models is well-documented, e.g. \cite%
{Ga06, LM}. Evidence from estimation of parametric stochastic volatility
models suggests correlation parameter $\rho \approx -0.7$ or $\rho \approx
-0.8$ for S\&P 500, for instance; a finding fairly robust across models and
time periods \cite{AS}.

\bigskip

When writing the expansion (\ref{ExpSS}) in terms of log-price $Y=\log S$,
it indeed has the form (\ref{feps_intronew}) with $y=\log s=1/\varepsilon
^{2}$ and $c_{1}=B_{1}-1,c_{2}=B_{2}$. More generally, we can show from
rather general and robust principles that the tail behaviour of $Y_{T}\in 
\mathbb{R}^{1}$ for fixed $T>0$, subject to a certain scaling with parameter 
$\theta \in \{1,2\}$ in the full Markovian specification of the model, has
the form 
\begin{equation}
f\left( y,T\right) =e^{-c_{1}y^{2/\theta }}e^{c_{2}y^{1/\theta }}y^{\frac{1}{%
\theta }-1}\left( c_{0}+O\left( 1/y^{1/\theta }\right) \right) \text{ as }%
y\uparrow \infty .  \label{flarge_intronew}
\end{equation}%
Again, such an expansion leads immediately to call price and then
(Black--Scholes) implied volatility expansions in the large strike regime,
cf. \cite{GuSt, Lee11}; in particular, in the case $\theta =2$ typical for
stochastic volatility (see \cite{FGGS} for similar results in the Heston
model) the expansion (\ref{ImplVolExp}) remains valid with $B_{1}=c_{1}+1$
and $c_{2}=B_{2}$. We note that the square-root growth of implied
volatility, in terms of log-strike, is actually a very general feature of
models with moment explosions, \cite{Lee, BF} which includes many stochastic
volatility models \cite{Ga06, LM, BF2}.

The main contribution of this paper is to establish validity of (\ref{ExpSS}%
), equivalently (\ref{flarge_intronew}) with $\theta =2$, for the \textit{%
correlated} Stein--Stein model. Having in mind the typical values o $\rho $
in equity markets, our focus is on the case $-1<\rho \leq 0$ (although our
analysis could be adapted to positive correlation). The leading
order behaviour described by $\beta _{1}=\beta _{1}\left( c_{1}+1\right) $ is
well understood; see \cite{Lee, BF} and also \cite[p40, p265]{Deuschel}. The
second order behaviour is given by $\beta _{2}=\beta _{2}\left(
c_{1}+1,c_{2}\right) $. Further terms in this expansion are in principle
possible \cite{Lee11}; in particular, the next term would involve $c_{0}$.
Our main observation is that the Stein--Stein model has the scaling
properties necessary to transform it into a small noise problem which can
then be tackled with the methods of \cite{DFJVpartI}. It should be noted
that the Stein--Stein model is hypoelliptic, with region of degeneracy given
by $\left\{ \left( y,z\right) :z=0\right\} $, and that the $\varepsilon $%
-rescaled Stein--Stein model is started (as $\varepsilon \rightarrow 0$) in
the degenerate region. In other words, there is no escape in dealing with
the hypoellipticity of the problem.\footnote{%
In contrast, short time asymptotics in "locally elliptic" stochastic
volatility models usually can be localized to a non-degenerate region. } 

Density expansions of diffusions in the small noise regime, with
applications to implied volatility expansions, were recently considered by
Y. Osajima \cite{O}, based on joint work with S. Kusuoka \cite{KO} and
old work of Kusuoka--Strook \cite{KS}. We
partially improve on these results. First, as was already mentioned in \cite{DFJVpartI},
any  expansion of the form (\ref{feps_intronew}), or (\ref{flarge_intronew}), with $c_2 \ne 0$ is out of reach in these works,
the reason being that the Kusuoka--Stroock theory was set up as expansion in $\varepsilon^2$
rather than $\varepsilon$. Secondly, in comparison with \cite{O}, we do not assume $%
\mathrm{x}_{0}$ near $\left( \mathrm{y},\cdot \right) $. And finally, in
further contrast to (the general results in) \cite{KO, KS} we provide a
checkable, finite-dimensional criterion that guarantees that the crucial
infinite-dimensional non-degeneracy assumption, left as such in \cite{KO, KS}%
, is actually satisfied. On the other hand, these authors give explicit
formulae for $c_{0}$ which we (presently) do not. Let us also emphasize (cf. corollary \ref{CorShortTime} below) that the expansion (\ref{feps_intronew}) can be 
used, as a simple consequence of Brownian scaling,
towards short time expansion for projected diffusion densities, under global
conditions on $\left( \mathrm{x}_{0},\mathrm{y}\right) $, of the form 
\begin{equation}
f\left( \mathrm{y},t\right) \sim   e^{ -\frac{d^{2}\left( \mathrm{x}_{0},%
\mathrm{y}\right) }{2t}} {t^{-l/2}}   c_{0}\left( \mathrm{x}_{0},\mathrm{y}%
\right)    \text{ as }t\downarrow 0.
\label{fshort_intronew}
\end{equation}%
When $l=d$, and then $\mathrm{y=x}$, such expansions go back to classical works starting with
Molchanov \cite{Mo75} (itself the main reference for the famous SABR\ paper, 
\cite{HLW}). The leading order behaviour $2t\log f\left( \mathrm{x},t\right)
\sim -d^{2}\left( \mathrm{x}_{0},\mathrm{x}\right) $ is due to Varadhan \cite%
{Va67}. The case $l<d$, in particular our global condition on $\left( 
\mathrm{x}_{0},\mathrm{y}\right) $, appears to be new. That said, expansions
of this form have appeared in \cite{TW, HLW, O}; the last two references
aimed at implied volatility expansions. In the context of a time-homogenuous
local volatility models ($l=d=1$), the expansion (\ref{fshort_intronew})
holds trivially without any conditions on $\left( \mathrm{x}_{0},\mathrm{y}%
\right) $; the resulting expansion was derived (with explicit constant $c_{0}
$) in \cite{GaEtAl}. Subject to mild technical conditions on the diffusion
coefficient, they show how to deduce first a call price and then an implied
volatility expansion in the short time (to maturity) regime:%
\begin{equation*}
\sigma _{BS}\left( k,t\right) =\left\vert k\right\vert /d\left( \mathrm{x}%
_{0},k\right) +c\left( \mathrm{x}_{0},k\right) t+O\left( t^{2}\right) \text{
as }t\downarrow 0;
\end{equation*}%
where $d\left( \mathrm{x}_{0},k\right) $ is a point-point distance and $%
c\left( \mathrm{x}_{0},k\right) $ is explicitly given. The celebrated
Berestycki--Busca--Florent (BBF) formula \cite{BBF} asserts that $\sigma
_{BS}\left( k,t\right) \sim \left\vert k\right\vert /d\left( \mathrm{x}%
_{0},k\right) $ as $t\downarrow 0$, is in fact valid in generic stochastic
volatilty models, $d\left( \mathrm{x}_{0},k\right) $ is then understood as
point-hyperplane distance. In fact, $\left\vert k\right\vert /d\left( 
\mathrm{x}_{0},k\right) $ arose as initial condition of a non-linear
evolution equation for the entire implied volatility surface. As briefly
indicated in \cite[Sec 6.3]{BBF} this can be used for a Taylor expansion of $%
\sigma _{BS}\left( k,t\right) $ in $t$. Such expansions have also been
discussed, based on heat kernel expansions on Riemannian manifolds by \cite%
{BC, HL, Pau}, not always in full mathematical rigor. Some mathematical
results are given in \cite{O}, assuming ellipticity and \textit{%
close-to-the-moneyness} $\left\vert k\right\vert <<1$; see also forthcoming
work by Ben Arous--Laurence \cite{BenArousLaurence}. We suspect that our
formula (\ref{fshort_intronew}), potentially applicable far-from-the-money,
will prove useful in this context and shall return to this in future work.

It should be noted, that the BBF formula alone can be obtained from soft
large deviation arguments, cf. \cite[Sec. 3.2.1]{Ph} and the references
therein. In a similar spirit,  cf. \cite[Sec 5, Rmk 2.9]{VLec}, the Varadhan-type formula $2t\log f\left( 
\mathrm{y},t\right) \sim -d^{2}\left( \mathrm{x}_{0},\mathrm{y}\right) $,
when $l<d$, could be shown, without any conditions on $(\mathrm{x}_{0},\mathrm{%
y})$ by large deviation methods, only relying on the existence of a
reasonable density.

As a final note, we recall that the (in general, non-Markovian) $\mathbb{R}%
^{l}$-valued It\^{o}-process $\left( \mathrm{Y}_{t}:t\geq 0\right) $ admits
- subject to some technical assumptions \cite{Gy, Pi} - a \textit{Markovian
(or Gy\"{o}ngy) projection}. That is, a time-inhomogeneous Markov diffusion $%
(\mathrm{\tilde{Y}}_{t}:t\geq 0)$ with matching time-marginals i.e $\mathrm{Y%
}_{t}=\mathrm{\tilde{Y}}_{t}$ \ (in law) for every fixed $t\geq 0$. In a
financial context, when $l=1$, this process is known as (Dupire) local
volatility model and various authors \cite{BBF, BC, HL, BenArousLaurence}
have used this as an important intermediate step in computing implied
volatility in stochastic volatility models. Since all our expansions (small
noise, tail, short time ) are relative to such time-marginals they may also
be viewed as expansions for the corresponding Markovian projections.

\ \textbf{Acknowledgement:} JDD and AJ acknowledge (partial resp. full)
finanical support from MATHEON. PKF\ acknowledges partial support from
MATHEON\ and the European Research Council under the European Union's
Seventh Framework Programme (FP7/2007-2013) / ERC grant agreement nr.
258237. PKF would like to thank G. Ben Arous for pointing out conceptual
similarities in \cite{FGGS, Benarous} and several discussions thereafter. It
is also a pleasure to thank F. Baudoin, J.P. Gauthier, A. Gulisashvili and
P. Laurence for their interest and feedback.

\section{The main result of \protect\cite{DFJVpartI}}

\bigskip Consider a $d$-dimensional diffusion $\left( \mathrm{X}%
_{t}^{\varepsilon }\right) _{t\geq 0}$ given by the stochastic differential
equation 
\begin{equation}
d\mathrm{X}_{t}^{\varepsilon }=b\left( \varepsilon ,\mathrm{X}%
_{t}^{\varepsilon }\right) dt+\varepsilon \sigma \left( \mathrm{X}%
_{t}^{\varepsilon }\right) dW_{t},\quad \text{with }\mathrm{X}%
_{0}^{\varepsilon }=\mathrm{x}_{0}^{\varepsilon }\in \mathbb{R}^{d}
\label{SDEXeps}
\end{equation}%
and where $W=(W^{1},\dots ,W^{m}$) is an $m$-dimensional Brownian motion.
Unless otherwise stated, we assume $b:[0,1)\times \mathbb{R}^{d}\rightarrow 
\mathbb{R}^{d},$ $\sigma =\left( \sigma _{1},\dots ,\sigma _{m}\right) :%
\mathbb{R}^{d}\rightarrow \mathrm{Lin}\left( \mathbb{R}^{m},\mathbb{R}%
^{d}\right) $ and $\mathrm{x}_{0}^{\cdot }:[0,1)\rightarrow \mathbb{R}^{d}$
to be smooth, bounded with bounded derivatives of all orders. Set $\sigma
_{0}=b\left( 0,\cdot \right) $ and assume that, for every multiindex $\alpha 
$, the drift vector fields $b\left( \varepsilon ,\cdot \right) $ converges
to $\sigma _{0}$ in the sense\footnote{%
If (\ref{SDEXeps}) is understood in Stratonovich sense, so that $dW$ is
replaced by $\circ dW$, the drift vector field $b\left( \varepsilon ,\cdot
\right) $ is changed to $\tilde{b}\left( \varepsilon ,\cdot \right) =b\left(
\varepsilon ,\cdot \right) -\left( \varepsilon ^{2}/2\right)
\sum_{i=1}^{m}\sigma _{i}\cdot \partial \sigma _{i}$. In particular, $\sigma
_{0}$ is also the limit of $\tilde{b}\left( \varepsilon ,\cdot \right) $ in
the sense of (\ref{bepsTob}) .}%
\begin{equation}
\partial _{x}^{\alpha }b\left( \varepsilon ,\cdot \right) \rightarrow
\partial _{x}^{\alpha }b\left( 0,\cdot \right) =\partial _{x}^{\alpha
}\sigma _{0}\left( \cdot \right) \text{ uniformly on compacts as }%
\varepsilon \downarrow 0\text{.}  \label{bepsTob}
\end{equation}%
We shall also assume that%
\begin{equation}
\partial _{\varepsilon }b\left( \varepsilon ,\cdot \right) \rightarrow
\partial _{\varepsilon }b\left( 0,\cdot \right) \text{ \ uniformly on
compacts as }\varepsilon \downarrow 0  \label{bepsC1}
\end{equation}%
and 
\begin{equation}
\mathrm{x}_{0}^{\varepsilon }=\mathrm{x}_{0}+\varepsilon \mathrm{\hat{x}}%
_{0}+o\left( \varepsilon \right) \text{ as }\varepsilon \downarrow 0\text{. }
\label{x0eps_ass}
\end{equation}

\begin{theorem}
\label{thm:MainThm} \textbf{(Small noise)} Let $\left( \mathrm{X}%
^{\varepsilon }\right) $ be the solution process to%
\begin{equation*}
d\mathrm{X}_{t}^{\varepsilon }=b\left( \varepsilon ,\mathrm{X}%
_{t}^{\varepsilon }\right) dt+\varepsilon \sigma \left( \mathrm{X}%
_{t}^{\varepsilon }\right) dW_{t},\quad \text{with }\mathrm{X}%
_{0}^{\varepsilon }=\mathrm{x}_{0}^{\varepsilon }\in \mathbb{R}^{d}.
\end{equation*}%
Assume $b\left( \varepsilon ,\cdot \right) \rightarrow \sigma _{0}\left(
\cdot \right) $ in the sense of (\ref{bepsTob}), (\ref{bepsC1}), and $%
\mathrm{X}_{0}^{\varepsilon }\equiv \mathrm{x}_{0}^{\varepsilon }\rightarrow 
\mathrm{x}_{0}$ as $\varepsilon \rightarrow 0$ in the sense of (\ref%
{x0eps_ass}). Assume the weak H\"{o}rmander condition (\ref{H}) at $\mathrm{x%
}_{0}\in \mathbb{R}^{d}$;%
\begin{equation}
\mathrm{span}\left[ \sigma _{i}:1\leq i\leq m;\text{ }\left[ \sigma
_{j},\sigma _{k}\right] :0\leq j,k\leq m;...\right] _{\mathrm{x}_{0}}=%
\mathbb{R}^{d};  \tag{H}  \label{H}
\end{equation}%
Fix $\mathrm{y}\in \mathbb{R}^{l},\,N_{\mathrm{y}}:=\left( \mathrm{y},\cdot
\right) $ and let $\mathcal{K}_{\mathrm{y}}$ be the the space of all $%
\mathrm{h}\in H$ s.t. the solution to%
\begin{equation*}
d\phi _{t}^{\mathrm{h}}=\sigma _{0}\left( \phi _{t}^{\mathrm{h}}\right)
dt+\sum_{i=1}^{m}\sigma _{i}\left( \phi _{t}^{\mathrm{h}}\right) d\mathrm{h}%
_{t}^{i},\,\,\phi _{0}^{\mathrm{h}}=\mathrm{x}_{0}\in \mathbb{R}^{d}
\end{equation*}%
satisfies $\phi _{T}^{\mathrm{h}}\in N_{\mathrm{y}}$. 
We assume $\mathcal{K}_{\mathrm{y}}$ to be non-empty%
\footnote{A well-known sufficient condition (cf. \cite{DFJVpartI} and the references
therein) is the \textit{strong H\"{o}rmander condition}  (H1), as stated in
corollary \ref{CorShortTime} below.}
 and the energy 
\begin{equation*}
\Lambda \left( \mathrm{y}\right) =\inf \left\{ \frac{1}{2}\Vert \mathrm{h}%
\Vert _{H}^{2}:\mathrm{h}\in \mathcal{K}_{\mathrm{y}}\right\} .
\end{equation*}%
to be a smooth function in a neighbourhood of $\mathrm{y}$.  Asssume furthermore \newline
(i) there are only finitely many minimizers, i.e. $\mathcal{K}_{%
\mathrm{y}}^{\min }<\infty $ where 
\begin{equation*}
\mathcal{K}_{\mathrm{y}}^{\min }:=\left\{ \mathrm{h}_{0}\in \mathcal{K}_{%
\mathrm{y}}:\frac{1}{2}\Vert \mathrm{h}_{0}\Vert _{H}^{2}=\Lambda \left( 
\mathrm{y}\right) \right\} ;
\end{equation*}%
(ii) non-degeneracy of the so-called deterministic Malliavin covariance
matrix at each minimizer - a sufficient condition met in most ("locally elliptic") financial
models reads%
\begin{equation*}
\forall \mathrm{h}_{0}\in \mathcal{K}_{\mathrm{y}}^{\min }:\exists t\in %
\left[ 0,T\right] :\mathrm{span}\left[ \sigma _{1},\dots ,\sigma _{m}\right]
|_{\phi _{t}^{\mathrm{h}_{0}}}=\mathbb{R}^{d};
\end{equation*}%
(iii) $\mathrm{x}_{0}$ is non-focal for $N_{\mathrm{y}}$ in the sense of 
\cite{DFJVpartI}. (We shall review below how to check this.)\newline

Then, keeping $\mathrm{x}_{0},$\textrm{y} and $T>0$ fixed, there exists $%
c_{0}=c_{0}\left( \mathrm{x}_{0},\mathrm{y},T\right) >0$ such that%
\begin{equation*}
\mathrm{Y}_{T}^{\varepsilon }=\Pi _{l}\mathrm{X}_{T}^{\varepsilon }=\left(
X_{T}^{\varepsilon ,1},\dots ,X_{T}^{\varepsilon ,l}\right) ,\,\,\,\,1\leq
l\leq d
\end{equation*}%
admits a density with expansion%
\begin{equation*}
f^{\varepsilon }\left( \mathrm{y},T\right) =e^{-\frac{\Lambda \left( \mathrm{%
y}\right) }{\varepsilon ^{2}}}e^{\,\frac{\max \left\{ \Lambda ^{\prime
}\left( \mathrm{y}\right) \cdot \,\mathrm{\hat{Y}}_{T}\left( \mathrm{h}%
_{0}\right) :\mathrm{h}_{0}\in \mathcal{K}_{\mathrm{y}}^{\min }\right\} }{%
\varepsilon }}\varepsilon ^{-l}\left( c_{0}+O\left( \varepsilon \right)
\right) \text{ as }\varepsilon \downarrow 0.
\end{equation*}%
Here $\mathrm{\hat{Y}=\hat{Y}}\left( \mathrm{h}_{0}\right) =\left( \hat{Y}%
^{1},\dots ,\hat{Y}^{l}\right) $ is the projection, $\mathrm{\hat{Y}=}\Pi
_{l}\mathrm{\hat{X}}$, of the solution to the following (ordinary)
differential equation 
\begin{eqnarray}
d\mathrm{\hat{X}}_{t} &=&\Big(\partial _{x}b\left( 0,\phi _{t}^{\mathrm{h}%
_{0}}\left( \mathrm{x}_{0}\right) \right) +\partial _{x}\sigma (\phi _{t}^{%
\mathrm{h}_{0}}\left( \mathrm{x}_{0}\right) )\mathrm{\dot{h}}_{0}\left(
t\right) \Big)\mathrm{\hat{X}}_{t}dt+\partial _{\varepsilon }b\left( 0,\phi
_{t}^{\mathrm{h}_{0}}\left( \mathrm{x}_{0}\right) \right) dt,
\label{eq:SDEYHat} \\
\,\,\,\,\,\mathrm{\hat{X}}_{0} &=&\mathrm{\hat{x}}_{0}.  \notag
\end{eqnarray}
\end{theorem}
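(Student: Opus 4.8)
The plan is to read off the leading exponential order from a Freidlin--Wentzell large deviation principle and then upgrade it to the full expansion by a Laplace-type asymptotic analysis on Wiener space, in the spirit of Ben Arous, with two features specific to our setting: the object to be pushed forward is a Dirac mass on $\mathbb{R}^{l}$ (not on $\mathbb{R}^{d}$), so only a \emph{partial} deterministic Malliavin non-degeneracy — hypothesis (ii) — is required, and the $\varepsilon^{\pm1}$ corrections coming from the drift perturbation $b(\varepsilon,\cdot)$ and from $\mathrm{x}_{0}^{\varepsilon}=\mathrm{x}_{0}+\varepsilon\mathrm{\hat{x}}_{0}+o(\varepsilon)$ must be tracked explicitly. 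First I would write $f^{\varepsilon}(\mathrm{y},T)=\mathbb{E}\big[\delta_{\mathrm{y}}(\mathrm{Y}_{T}^{\varepsilon})\big]$, interpreted in the sense of Watanabe as the pairing of a generalised Wiener functional with the smooth test ``function'' $\delta_{\mathrm{y}}$; the weak H\"ormander condition (H) at $\mathrm{x}_{0}$ (together with the uniform ellipticity of $\varepsilon\sigma$ for fixed $\varepsilon>0$) guarantees that $\mathrm{X}_{T}^{\varepsilon}$, hence $\mathrm{Y}_{T}^{\varepsilon}$, is Malliavin non-degenerate and the density smooth, so this pairing is legitimate. Next, by Schilder's theorem for $\varepsilon W$ and continuity of the It\^{o} map, the contribution to $\mathbb{E}[\delta_{\mathrm{y}}(\mathrm{Y}_{T}^{\varepsilon})]$ coming from outside a fixed Cameron--Martin neighbourhood of the finite set $\mathcal{K}_{\mathrm{y}}^{\min}$ is $O\big(e^{-(\Lambda(\mathrm{y})+\eta)/\varepsilon^{2}}\big)$ for some $\eta>0$, so it suffices to analyse, one by one, a small tube around each minimiser $\mathrm{h}_{0}\in\mathcal{K}_{\mathrm{y}}^{\min}$ and add up.

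Around a given $\mathrm{h}_{0}$ I would perform the Cameron--Martin shift $W\mapsto W+\mathrm{h}_{0}/\varepsilon$. The Girsanov density contributes $e^{-\Lambda(\mathrm{y})/\varepsilon^{2}}e^{-\langle\mathrm{h}_{0},W\rangle_{H}/\varepsilon}$, using $\tfrac12\|\mathrm{h}_{0}\|_{H}^{2}=\Lambda(\mathrm{y})$, and the shifted, rescaled diffusion admits a stochastic Taylor expansion $\mathrm{X}^{\varepsilon}=\phi^{\mathrm{h}_{0}}+\varepsilon(\mathrm{\hat{X}}+\mathrm{U})+\varepsilon^{2}(\cdots)+\cdots$, where $\mathrm{\hat{X}}$ is precisely the deterministic curve solving \eqref{eq:SDEYHat} (it absorbs both $\mathrm{\hat{x}}_{0}$ and $\partial_{\varepsilon}b(0,\cdot)$) and $\mathrm{U}$ is the Gaussian process obtained by linearising the controlled ODE along $\phi^{\mathrm{h}_{0}}$. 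Projecting by $\Pi_{l}$ and using $\Pi_{l}\phi_{T}^{\mathrm{h}_{0}}=\mathrm{y}$, the Dirac mass becomes $\varepsilon^{-l}\,\delta_{0}\big(\Pi_{l}(\mathrm{\hat{X}}_{T}+\mathrm{U}_{T})+O(\varepsilon)\big)$ — this is the source of the $\varepsilon^{-l}$. The remaining $\varepsilon^{-1}$ term of the exponent is identified by noting that the $\varepsilon$-perturbations of the initial datum and the drift shift the constrained minimal energy by $-\varepsilon\,\Lambda'(\mathrm{y})\cdot\mathrm{\hat{Y}}_{T}(\mathrm{h}_{0})+O(\varepsilon^{2})$ (smoothness of $\Lambda$ near $\mathrm{y}$ and the envelope/first-order optimality at $\mathrm{h}_{0}$ are used here), producing the factor $e^{\Lambda'(\mathrm{y})\cdot\mathrm{\hat{Y}}_{T}(\mathrm{h}_{0})/\varepsilon}$; since every $\mathrm{h}_{0}\in\mathcal{K}_{\mathrm{y}}^{\min}$ gives the same leading order $\Lambda(\mathrm{y})$, the dominant tube is the one with the largest such correction, whence the $\max$ over $\mathcal{K}_{\mathrm{y}}^{\min}$.

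What is then left is a Wiener-space integral of the schematic form $\mathbb{E}\big[G^{\varepsilon}\,\delta_{0}(\text{Gaussian}+O(\varepsilon))\big]$, to which a Ben Arous-type Laplace expansion applies. Hypothesis (ii) makes the relevant $l\times l$ deterministic Malliavin covariance matrix invertible, so $\delta_{0}$ pulls back to a genuine finite density of the directing Gaussian; hypothesis (iii), non-focality of $\mathrm{x}_{0}$ for $N_{\mathrm{y}}$, is exactly the statement that the Hessian of $\mathrm{h}\mapsto\tfrac12\|\mathrm{h}\|_{H}^{2}$ restricted to the constraint manifold $\{\mathrm{h}:\phi_{T}^{\mathrm{h}}\in N_{\mathrm{y}}\}$ is (positive) non-degenerate at each minimiser — the substitute for the classical ``not-in-cut-locus'' condition — which guarantees that the Gaussian integral is non-degenerate and that the formal stationary-phase/Laplace expansion of $\mathbb{E}[\delta_{\mathrm{y}}(\mathrm{Y}_{T}^{\varepsilon})]$ is valid. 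Collecting orders in $\varepsilon$, the leading coefficient $c_{0}$ is a finite sum over the minimisers in $\mathcal{K}_{\mathrm{y}}^{\min}$ attaining the max, each term being the product of a (positive) Gaussian normalisation with a Hessian determinant factor, hence $c_{0}>0$.

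The hard part is the last step: proving that this formal expansion is correct up to the stated $O(\varepsilon)$ relative error. This requires, on one hand, uniform Malliavin--Sobolev estimates for the shifted rescaled process so that the remainders of the stochastic Taylor expansion can be controlled in the negative-order Watanabe spaces in which $\delta_{0}$ lives, and, on the other hand, showing that the \emph{finite-dimensional} non-focality criterion (iii) genuinely forces non-degeneracy of the \emph{infinite-dimensional} Hessian that governs the Wiener-space Laplace method; this second point — essentially a second-variation computation on the constrained control problem, reduced to a Jacobi-type condition along $\phi^{\mathrm{h}_{0}}$ — is where the bulk of the work lies and is precisely the checkable replacement for the opaque non-degeneracy assumptions of Kusuoka--Stroock. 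A final routine check that summing the finitely many tube contributions does not destroy the expansion completes the argument.
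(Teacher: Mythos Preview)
There is nothing to compare against here: the present paper does not prove this theorem. Section~2 is titled ``The main result of \cite{DFJVpartI}'' and Theorem~\ref{thm:MainThm} is simply quoted from the companion paper; Part~II is devoted to \emph{applications} (corollaries~\ref{CorShortTime}, \ref{CorTail}, and the Stein--Stein analysis), not to re-proving the small-noise expansion itself.

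Your sketch is a reasonable high-level outline of the Ben~Arous/Azencott/Kusuoka--Stroock machinery (large-deviation localisation to tubes around minimisers, Cameron--Martin shift, stochastic Taylor expansion, Watanabe pairing with $\delta_{\mathrm{y}}$, Laplace asymptotics with a Hessian non-degeneracy condition), and this is indeed broadly the route taken in \cite{DFJVpartI}. But as a self-contained proof it remains at the level of a plan: the two places you flag as ``the hard part'' --- uniform Malliavin/Sobolev control of the remainders in negative-order Watanabe spaces, and the reduction of the infinite-dimensional Hessian non-degeneracy to the finite-dimensional non-focality criterion~(iii) --- are precisely the substantive content of \cite{DFJVpartI}, and you have not supplied arguments for either. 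So the proposal is not wrong in spirit, but it is a summary of what needs to be done rather than a proof, and in any case the paper under review offers no proof of its own to benchmark it against.
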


\begin{remark}[Smoothness of energy]
If $\#\mathcal{K}_{\mathrm{y}}^{\min }=1$ smoothness of the energy is actually
a (non-trivial) consequence of the present assumptions and hence need not be assumed; 
\cite{DFJVpartI}. Note also that in our application to tail asymptotics, with $\theta $-scaling,
 $\theta \in \left\{1,2\right\} $ and scalar variable $y$, it follows from scaling that the energy 
 will be a linear resp. quadratic (and hence smooth) function of $y$. 
 \end{remark}

\begin{remark}[Localization]
The assumptions on the coefficients $b,\sigma $ in theorem \ref{thm:MainThm}
(smooth, bounded with bounded derivatives of all orders) are typical in this
context (cf. Ben\ Arous \cite{Benarous, Benarous2} for instance) but rarely
met in practical examples from finance. This difficulty can be resolved by a
suitable localization. For instance, as detailed in \cite{DFJVpartI}, an
estimate of the form  
\begin{equation}
\lim_{R\rightarrow \infty }\lim \sup_{\varepsilon \rightarrow 0}\varepsilon
^{2}\log \mathbb{P}\left[ \tau _{R}\leq T\right] =-\infty .  \label{Assloc}
\end{equation}%
with $\tau _{R}:=\inf \left\{ t\in \left[ 0,T\right] :\sup_{s\in \left[ 0,t%
\right] }\left\vert \mathrm{X}_{s}^{\varepsilon }\right\vert \geq R\right\} $
will allow to bypass the boundedness assumptions. 
\end{remark}

\section{\protect\bigskip Short time and tail asymptotics}

The reduction of \textit{short time expansions} to small noise expansions by
Brownian scaling is classical. In the present context, we have the following
statement, taken from \cite[Sec. 2.1]{DFJVpartI}. 

\begin{corollary}
\textbf{(Short time)} \label{CorShortTime}Consider $d\mathrm{X}_{t}=b\left( 
\mathrm{X}_{t}\right) dt+\sigma \left( \mathrm{X}_{t}\right) dW$, started at 
$\mathrm{X}_{0}=\mathrm{x}_{0}\in \mathbb{R}^{d}$, with $C^{\infty }$%
-bounded vector fields such that the strong H\"{o}rmander condition holds, 
\begin{equation}
\forall \mathrm{x}\in \mathbb{R}^{d}:\mathrm{Lie}\left[ \sigma _{1},\dots
,\sigma _{m}\right] |_{\mathrm{x}}=\mathbb{R}^{d}.  \tag{H1}  \label{H1cond}
\end{equation}%
Fix $\mathrm{y}\in \mathbb{R}^{l},\,N_{\mathrm{y}}:=\left( \mathrm{y},\cdot
\right) $ and assume (i),(ii),(iii) as in theorem \ref{thm:MainThm}. Let $%
f\left( t,\cdot \right) =f\left( t,\mathrm{y}\right) $ be the density of $%
\mathrm{Y}_{t}=\left( \mathrm{X}_{t}^{1},\dots ,\mathrm{X}_{t}^{l}\right) $.
Then, for some constant $c_0 = c_{0}\left( \mathrm{x}_{0},\mathrm{y}%
\right)>0$,
 \begin{equation}
f\left( \mathrm{y},t\right) \sim   e^{ -\frac{d^{2}\left( \mathrm{x}_{0},%
\mathrm{y}\right) }{2t}} {t^{-l/2}}   c_{0}  \text{ as }t\downarrow 0.
\end{equation}%
where $d\left( \mathrm{x}_{0},\mathrm{y}\right) $ is the sub-Riemannian
distance, based on $\left( \sigma _{1},\dots ,\sigma _{m}\right) $, from the
point $\mathrm{x}_{0}$ to the affine subspace $N_{\mathrm{y}}$.
\end{corollary}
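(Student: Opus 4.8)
The plan is to reduce the statement to Theorem~\ref{thm:MainThm} via the classical Brownian rescaling that turns a fixed diffusion observed over a short horizon $t$ into a small-noise diffusion observed over unit time. Fix $T=1$ and set $\varepsilon=\sqrt t$; define $\mathrm{X}^{\varepsilon}_{s}:=\mathrm{X}_{\varepsilon^{2}s}$ for $s\in[0,1]$. Changing variables in the drift integral and time-changing the It\^{o} integral (using that $s\mapsto \varepsilon^{-1}W_{\varepsilon^{2}s}$ is again a standard Brownian motion, call it $W^{\varepsilon}$) shows that $\mathrm{X}^{\varepsilon}$ solves
\begin{equation*}
d\mathrm{X}^{\varepsilon}_{s}=\varepsilon^{2}b\!\left(\mathrm{X}^{\varepsilon}_{s}\right)ds+\varepsilon\,\sigma\!\left(\mathrm{X}^{\varepsilon}_{s}\right)dW^{\varepsilon}_{s},\qquad \mathrm{X}^{\varepsilon}_{0}=\mathrm{x}_{0},
\end{equation*}
which is of the form (\ref{SDEXeps}) with $b(\varepsilon,\cdot):=\varepsilon^{2}b(\cdot)$ --- so that $\sigma_{0}=b(0,\cdot)\equiv 0$ --- and with $\mathrm{x}_{0}^{\varepsilon}\equiv\mathrm{x}_{0}$, so that $\widehat{\mathrm{x}}_{0}=0$. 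Since $\mathrm{Y}^{\varepsilon}_{1}=\Pi_{l}\mathrm{X}_{\varepsilon^{2}}=\mathrm{Y}_{t}$ exactly (a deterministic time change), the density of $\mathrm{Y}_{t}$ at $\mathrm{y}$ is precisely $f^{\varepsilon}(\mathrm{y},1)$ in the notation of Theorem~\ref{thm:MainThm}.

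Next I would verify the hypotheses of Theorem~\ref{thm:MainThm} for this rescaled family. Conditions (\ref{bepsTob}), (\ref{bepsC1}), (\ref{x0eps_ass}) hold trivially, since $\varepsilon^{2}b$ and all its spatial derivatives, as well as $\partial_{\varepsilon}(\varepsilon^{2}b)=2\varepsilon b$, vanish uniformly on compacts as $\varepsilon\downarrow 0$. Because $\sigma_{0}\equiv 0$, the weak H\"{o}rmander condition (\ref{H}) at $\mathrm{x}_{0}$ collapses to the bracket-generating property of $\sigma_{1},\dots,\sigma_{m}$ at $\mathrm{x}_{0}$, which is precisely (\ref{H1cond}); the same condition makes the driftless control system controllable (Chow--Rashevskii), so $\mathcal{K}_{\mathrm{y}}\neq\emptyset$, while assumptions (i), (ii), (iii) are inherited verbatim from the hypotheses of the corollary. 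Smoothness of the energy near $\mathrm{y}$ is then available as in the remark on smoothness of the energy above (and in any case is guaranteed once (i)--(iii) hold).

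I would then identify the quantities in the conclusion of Theorem~\ref{thm:MainThm}. With $\sigma_{0}\equiv 0$ and the control system run on $[0,1]$, a Cauchy--Schwarz reparametrization argument gives
\begin{equation*}
\Lambda(\mathrm{y})=\inf\Big\{\tfrac12\Vert \mathrm{h}\Vert_{H}^{2}:\phi_{1}^{\mathrm{h}}\in N_{\mathrm{y}}\Big\}=\tfrac12\,d^{2}(\mathrm{x}_{0},\mathrm{y}),
\end{equation*}
the squared sub-Riemannian distance from $\mathrm{x}_{0}$ to the affine subspace $N_{\mathrm{y}}$. Moreover $b(0,\cdot)\equiv0$ forces $\partial_{x}b(0,\cdot)\equiv0$ and $\partial_{\varepsilon}b(0,\cdot)\equiv0$, so the linear ODE (\ref{eq:SDEYHat}) for $\widehat{\mathrm{X}}$ has vanishing inhomogeneous term and zero initial datum; hence $\widehat{\mathrm{X}}\equiv0$, $\widehat{\mathrm{Y}}\equiv0$, and the $e^{(\cdots)/\varepsilon}$ factor equals $1$. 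Substituting $\varepsilon^{2}=t$ and $\varepsilon^{-l}=t^{-l/2}$ into $f^{\varepsilon}(\mathrm{y},1)=e^{-\Lambda(\mathrm{y})/\varepsilon^{2}}\varepsilon^{-l}(c_{0}+O(\varepsilon))$ yields $f(\mathrm{y},t)=e^{-d^{2}(\mathrm{x}_{0},\mathrm{y})/(2t)}\,t^{-l/2}(c_{0}+O(\sqrt t))$, which is the assertion.

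The genuinely non-routine point is the identification $\Lambda(\mathrm{y})=\tfrac12 d^{2}(\mathrm{x}_{0},\mathrm{y})$: one must check that the free-endpoint energy-minimization problem (endpoint constrained to lie in $N_{\mathrm{y}}$) really produces the sub-Riemannian point-to-affine-subspace distance, and that conditions (i)--(iii) are precisely what make that distance function smooth in $\mathrm{y}$ near the given point, so that Theorem~\ref{thm:MainThm} is applicable. Everything else is bookkeeping built into the Brownian scaling.
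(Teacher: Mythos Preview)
Your proof is correct and follows exactly the approach the paper intends: the paper does not prove this corollary here but simply states that the reduction of short-time to small-noise via Brownian scaling is classical and refers to \cite[Sec.~2.1]{DFJVpartI}, and you have supplied precisely that scaling argument with all the relevant identifications ($b(\varepsilon,\cdot)=\varepsilon^{2}b(\cdot)$, $\sigma_{0}\equiv 0$, $\hat{\mathrm x}_{0}=0$, hence $\hat{\mathrm X}\equiv 0$ and $\Lambda(\mathrm y)=\tfrac12 d^{2}(\mathrm x_{0},\mathrm y)$). One small caveat: your appeal to the remark on smoothness of the energy only covers the case $\#\mathcal{K}_{\mathrm y}^{\min}=1$, whereas (i) allows finitely many minimizers; strictly speaking smoothness of $\Lambda$ near $\mathrm y$ is part of the standing hypotheses of Theorem~\ref{thm:MainThm} and should be carried over as such.
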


We also have the following application to the \textit{tail behaviour} of,
say, the first component (i.e. $l=1$ here) of a diffusion processes at a
fixed time $T$. As we shall see, the scaling assumption below is met in a number of
stochastic volatility models.

\begin{corollary}
\label{CorTail}\bigskip \textbf{(Tail behaviour)} Assume $\mathrm{x}%
_{0}^{\varepsilon }\rightarrow 0\in \mathbb{R}^{d}$ as $\varepsilon
\rightarrow 0$ and some diffusion process $\mathrm{X}^{\varepsilon }$,
started at $\mathrm{x}_{0}^{\varepsilon }$, satisfies the assumptions of
theorem \ref{thm:MainThm} with $\mathrm{x}_{0}=0$ and $N=\left( 1,\cdot
\right) \subset \mathbb{R}\times \mathbb{R}^{d-1}$; in particular, $\left\{ 
\mathrm{0}\right\} \times \left( 1,\cdot \right) $ is assumed to satisfy
condition (i),(ii),(iii). Assume also $\theta $\textbf{-scaling} by which we
mean the scaling relation%
\begin{equation*}
Y_{T}^{\varepsilon }\overset{\text{(law)}}{=}\varepsilon ^{\theta }Y_{T}%
\text{ where \ \ }Y\equiv \Pi _{1}\mathrm{X}
\end{equation*}%
for some $\theta \geq 1$. Then the probability density function of $Y_{T}$
has the expansion 
\begin{equation}
f\left( y\right) =e^{-c_{1}y^{\frac{2}{\theta }}}e^{c_{2}y^{\frac{1}{\theta }%
}}y^{\frac{1}{\theta }-1}\left( c_{0}+O\left( 1/y^{1/\theta }\right) \right) 
\text{ as }y\rightarrow \infty   \label{densityTheta}
\end{equation}%
where  
\begin{eqnarray*}
c_{1} &=&\Lambda \left( 1\right)  \\
c_{2} &=&\hat{Y}_{T}\Lambda ^{\prime }\left( 1\right) =\frac{2\hat{Y}_{T}}{%
\theta }\Lambda \left( 1\right) 
\end{eqnarray*}%
and $c_{0}>0$. In particular, when $\theta =1$ we have a Gaussian tail
behaviour of the precise form%
\begin{equation*}
f\left( y\right) =e^{-\Lambda \left( 1\right) y^{2}}e^{2\hat{Y}_{T}\,\Lambda
\left( 1\right) y}\left( c_{0}+O\left( 1/y\right) \right) ;
\end{equation*}%
while $\theta =2$ leads to the exponential tail of the precise form%
\begin{equation*}
f\left( y\right) =e^{-\Lambda \left( 1\right) y}e^{\hat{Y}_{T}\,\Lambda
^{\prime }\left( 1\right) \sqrt{y}}y^{-1/2}\left( c_{0}+O\left( 1/\sqrt{y}%
\right) \right) .
\end{equation*}
\end{corollary}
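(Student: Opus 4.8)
The plan is to obtain Corollary~\ref{CorTail} as a pure change-of-variables consequence of Theorem~\ref{thm:MainThm} applied at the \emph{single} fixed point $\mathrm{y}=1$. First I would record the density identity forced by the $\theta$-scaling: since $Y_T^\varepsilon$ and $Y_T$ are real-valued and $Y_T^\varepsilon\overset{\text{(law)}}{=}\varepsilon^\theta Y_T$, their densities satisfy $f^\varepsilon(y)=\varepsilon^{-\theta}f(\varepsilon^{-\theta}y)$; in particular $Y_T$ admits a density and $f^\varepsilon(1,T)=\varepsilon^{-\theta}f(\varepsilon^{-\theta})$, i.e.\ $f(\varepsilon^{-\theta})=\varepsilon^{\theta}f^\varepsilon(1,T)$.

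Next I would invoke Theorem~\ref{thm:MainThm} with $l=1$, $\mathrm{x}_0=0$ and $N_{\mathrm{y}}=(1,\cdot)$: the standing hypotheses (\ref{bepsTob}), (\ref{bepsC1}), (\ref{x0eps_ass}) on $b,\sigma,\mathrm{x}_0^\varepsilon$, the weak H\"ormander condition at $0$, and (i)--(iii) at $(0,1)$ are exactly what the corollary postulates, so
\[
f^\varepsilon(1,T)=e^{-\Lambda(1)/\varepsilon^2}\,e^{\,c_2/\varepsilon}\,\varepsilon^{-1}(c_0+O(\varepsilon)),\qquad c_2:=\max\{\Lambda'(1)\,\hat Y_T(h_0):h_0\in\mathcal{K}_1^{\min}\}.
\]
Substituting this into the scaling identity and then setting $\varepsilon=y^{-1/\theta}$ (so $\varepsilon^{-2}=y^{2/\theta}$, $\varepsilon^{-1}=y^{1/\theta}$, $\varepsilon^{\theta-1}=y^{1/\theta-1}$ and $O(\varepsilon)=O(y^{-1/\theta})$) yields exactly (\ref{densityTheta}) with $c_1=\Lambda(1)$; the advertised special cases $\theta=1$ (Gaussian tail) and $\theta=2$ (exponential tail) are immediate specializations. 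Since $\mathrm{y}=1$ is held fixed throughout, the $O(\varepsilon)$ coming from Theorem~\ref{thm:MainThm} requires no uniformity beyond what is already asserted there.

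The one point that needs an extra argument is the identity $\Lambda'(1)=\tfrac{2}{\theta}\Lambda(1)$, which converts $c_2=\Lambda'(1)\hat Y_T$ (with $\hat Y_T:=\max_{h_0\in\mathcal{K}_1^{\min}}\hat Y_T(h_0)$; the maximum is the relevant quantity because $\Lambda$ is increasing near $1$, hence $\Lambda'(1)>0$) into the stated $c_2=\tfrac{2\hat Y_T}{\theta}\Lambda(1)$. Here I would show that the $\theta$-scaling forces the energy $\Lambda(\cdot)$ to be positively homogeneous of degree $2/\theta$ near $\mathrm{y}=1$: either directly, by transporting the relation $Y_T^\varepsilon=\varepsilon^\theta Y_T$ to a dilation of the underlying control system which reparametrizes $\mathcal{K}_{\mathrm{y}}$ and rescales $\tfrac{1}{2}\|\cdot\|_H^2$ accordingly, or a posteriori, by comparing the leading-order rate $\lim_{\varepsilon\downarrow0}\varepsilon^2\log f^\varepsilon(a,T)=-\Lambda(a)$ at a general level $a>0$ with the asymptotics of $f$ just obtained (which give $\lim_{u\to\infty}u^{-2/\theta}\log f(u)=-\Lambda(1)$), forcing $\Lambda(a)=a^{2/\theta}\Lambda(1)$ and, differentiating at $a=1$, the claimed relation; this homogeneity is also the reason the Remark after Theorem~\ref{thm:MainThm} observes that smoothness of the energy is automatic in this setting. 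I do not expect any serious obstacle internal to this corollary --- it is essentially bookkeeping of scaling exponents together with the homogeneity of $\Lambda$ --- the genuine work in any concrete application being the verification of (i)--(iii) for the model at hand, which for the (correlated) Stein--Stein model is carried out in the sections that follow.
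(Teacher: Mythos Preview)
Your proposal is correct and follows essentially the same argument as the paper: use the $\theta$-scaling to write $f(\varepsilon^{-\theta})=\varepsilon^{\theta}f^{\varepsilon}(1,T)$, apply Theorem~\ref{thm:MainThm} at the fixed point $\mathrm{y}=1$ with $l=1$, and then substitute $\varepsilon=y^{-1/\theta}$; the paper likewise observes that the scaling forces $\Lambda(y)=y^{2/\theta}\Lambda(1)$ and hence $\Lambda'(1)=\tfrac{2}{\theta}\Lambda(1)$. Your treatment is in fact slightly more careful than the paper's (which simply asserts the homogeneity of $\Lambda$ as a consequence of scaling), and your remark that $\hat{Y}_T$ must be read as the maximum over $\mathcal{K}_1^{\min}$ is a useful clarification.
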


\begin{proof}
Let $f^{\varepsilon }$ denote the density of $Y_{T}^{\varepsilon }$. Since $%
f\left( \cdot /\varepsilon ^{\theta }\right) =\varepsilon ^{\theta
}f^{\varepsilon }\left( \cdot \right) $ we can take  $\cdot =1 \in   \mathbb{R}^{l}$, with $l=1$, and
apply theorem \ref{thm:MainThm}. This yields the claimed expansion in the "large space" variable
$y = 1/ \varepsilon ^{\theta }$;  it suffices to rephrase the $\varepsilon$-expansion of theorem \ref{thm:MainThm}
in terms of $y$.  Another observation is that the assumed scaling implies%
\begin{equation*}
\Lambda \left( y\right) =y^{2/\theta }\Lambda \left( 1\right) 
\end{equation*}%
and hence $\Lambda ^{\prime }\left( 1\right) =\frac{2}{\theta }\Lambda
\left( 1\right) $. The rest is obvious.
\end{proof}

\bigskip 

\section{Computational aspects}

We present briefly the \textit{mechanics} of the actual computations, in the
spirit of the Pontryagin maximum principle (e.g. \cite{SS}), the aim being
to find optimal paths which arrive at  $N_{\mathrm{a}}=\left( \mathrm{a},\cdot \right)$, i.e. a given 
"target" manifold. \footnote{We have $\mathrm{a} = \mathrm{y} \in  \mathbb{R}^{l}$, in context of small noise and short time expansions,
and $ \mathrm{a}=1 \in   \mathbb{R}^{l}$, with $l=1$, in the context of tail expansions, corollary \ref{CorTail}.}
This formalism is justified by assuming non-degeneracy of the so-called \textit{%
deterministic Malliavin covariance matrix} $C\left( \mathrm{h}_{0}\right) $,
at each $\mathrm{h}_{0}\in \mathcal{K}_{\mathrm{a}}^{\min }$ ; cf. \cite{TW,
DFJVpartI}. As pointed out earlier, a sufficient condition met in most
("locally elliptic") financial models reads%
\begin{equation*}
\forall \mathrm{h}\in \mathcal{K}_{\mathrm{a}}:\exists t\in \left[ 0,T\right]
:\mathrm{span}\left[ \sigma _{1},\dots ,\sigma _{m}\right] |_{\phi _{t}^{%
\mathrm{h}}}=\mathbb{R}^{d}.
\end{equation*}%
(This is proved in \cite{DFJVpartI}; for more information on $C\left( 
\mathrm{h}_{0}\right) $ see \cite{DFJVpartI} and the references therein; in
particular \cite{Bismut, TW}).

\begin{itemize}
\item \textbf{The Hamiltonian. }Based on the SDE (\ref{SDEXeps}), with
diffusion vector fields $\sigma _{1},\dots ,\sigma _{m}$ and drift vector
field $\sigma _{0}$ (in the $\varepsilon \rightarrow 0$ limit) we define the 
\textit{Hamiltonian}%
\begin{eqnarray*}
\mathcal{H}\left( \mathrm{x},\mathrm{p}\right)  &:&=\left\langle \mathrm{p}%
,\sigma _{0}\left( \mathrm{x}\right) \right\rangle +\frac{1}{2}%
\sum_{i=1}^{m}\left\langle \mathrm{p},\sigma _{i}\left( \mathrm{x}\right)
\right\rangle ^{2} \\
&=&\left\langle \mathrm{p},\sigma _{0}\left( \mathrm{x}\right) \right\rangle
+\frac{1}{2}\left\langle \mathrm{p},\left( \sigma \sigma ^{T}\right) \left( 
\mathrm{x}\right) \mathrm{p}\right\rangle .
\end{eqnarray*}%
Remark the driving Brownian motions  $W^{1},\dots ,W^{m}$ were assumed to be
independent. Many stochastic models, notably in finance, are written in
terms of correlated Brownians, i.e. with a non-trivial correlation matrix $%
\Omega =\left( \omega ^{i,j}:1\leq i,j\leq m\right) $, where $%
d\,\left\langle W^{i},W^{j}\right\rangle _{t}=\omega ^{i,j}dt$. The
Hamiltonian then becomes%
\begin{equation}
\mathcal{H}\left( \mathrm{x},\mathrm{p}\right) =\left\langle \mathrm{p}%
,\sigma _{0}\left( \mathrm{x}\right) \right\rangle +\frac{1}{2}\left\langle 
\mathrm{p},\left( \sigma \Omega \sigma ^{T}\right) \left( \mathrm{x}\right) 
\mathrm{p}\right\rangle .  \label{HamiltonianWithCorrel}
\end{equation}

\item \textbf{The Hamiltonian ODEs.} The following system of ordinary
differential equations,  
\begin{equation}
\left( 
\begin{array}{c}
\mathrm{\dot{x}} \\ 
\mathrm{\dot{p}}%
\end{array}%
\right) =\left( 
\begin{array}{c}
\partial _{\mathrm{p}}\mathcal{H}\left( \mathrm{x}\left( t\right) ,\mathrm{p}%
\left( t\right) \right)  \\ 
-\partial _{\mathrm{x}}\mathcal{H}\left( \mathrm{x}\left( t\right) ,\mathrm{p%
}\left( t\right) \right) 
\end{array}%
\right) ,  \label{HamiltonianODEs}
\end{equation}%
gives rise to a solution flow, denoted by $\mathrm{H}_{t\leftarrow 0}$, so
that%
\begin{equation*}
\mathrm{H}_{t\leftarrow 0}\left( \mathrm{x}_{0},\mathrm{p}_{0}\right) 
\end{equation*}%
is the unique solution to the above ODE with initial data $\left( \mathrm{x}%
_{0},\mathrm{p}_{0}\right) $. Our standing (regularity) assumption are more
than enough to guarantee uniqueness and local ODE\ existence. As in \cite[%
p.37]{Bismut}, the vector\ field $\left( \partial _{\mathrm{p}}\mathcal{H}%
,-\partial _{\mathrm{x}}\mathcal{H}\right) $ is complete, i.e.one has global
existence. It can be usefult to start the flow backwards with time-$T$
terminal data, say $\left( \mathrm{x}_{T},\mathrm{p}_{T}\right) $; we then
write%
\begin{equation*}
\mathrm{H}_{t\leftarrow T}\left( \mathrm{x}_{T},\mathrm{p}_{T}\right) 
\end{equation*}%
for the unique solution to (\ref{HamiltonianODEs}) with given time-$T$
terminal data. Of course,%
\begin{equation*}
\mathrm{H}_{t\leftarrow T}\left( \mathrm{H}_{T\leftarrow 0}\left( \mathrm{x}%
_{0},\mathrm{p}_{0}\right) \right) =\mathrm{H}_{t\leftarrow 0}\left( \mathrm{%
x}_{0},\mathrm{p}_{0}\right) .
\end{equation*}

\item \textbf{\ Solving the Hamiltonian ODEs as boundary value problem. }%
As before, $N_{\mathrm{a}}=\left( \mathrm{a},\cdot \right)$ is the given "target" manifold; the
analysis laid out in \cite{DFJVpartI} requires in a first step to solve the Hamiltonian ODEs (\ref%
{HamiltonianODEs}) with mixed initial -, terminal - and transversality
conditions,%
\begin{eqnarray}
\mathrm{x}\left( 0\right)  &=&\mathrm{x}_{0}\in \mathbb{R}^{d},  \notag \\
\mathrm{x}\left( T\right)  &=&\left( \mathrm{a},\cdot \right) \in \mathbb{R}%
^{l}\mathbb{\oplus R}^{d-l},  \notag \\
\mathrm{p}\left( T\right)  &=&\left( \cdot ,0\right) \in \mathbb{R}^{l}%
\mathbb{\oplus R}^{d-l}.  \label{ITTcond}
\end{eqnarray}%
Note that this is a $2d$-dimensional system of ordinary differential
equations, subject to $d+l+\left( d-l\right) =2d$ conditions. In general,
boundary problems for such ODEs may have more than one, exactly one or no
solution. In the present setting, there will always be one or more than one
solution. After all, we know \cite{DFJVpartI} that there exists at least one
minimizing control $\mathrm{h}_{0}$ and can be reconstructed via the
solution of the Hamiltonian ODEs, as explained in the following step.

\item  \textbf{Finding the minimizing controls. }The Hamiltonian ODEs, as
boundary value problem,  are effectively first order conditions (for
minimality) and thus yield \textit{candidates} for the minimizing control $%
\mathrm{h}_{0}=\mathrm{h}_{0}\left( \cdot \right) $, given by%
\begin{equation}
\mathrm{\dot{h}}_{0}=\left( 
\begin{array}{c}
\,\left\langle \sigma _{1}\left( \mathrm{x}\left( \cdot \right) \right) ,%
\mathrm{p}\left( \cdot \right) \right\rangle  \\ 
\dots  \\ 
\,\,\left\langle \sigma _{m}\left( \mathrm{x}\left( \cdot \right) \right) ,%
\mathrm{p}\left( \cdot \right) \right\rangle 
\end{array}%
\right) .  \label{Formula_h0}
\end{equation}%
Each such candidate is indeed admissible in the sense $\mathrm{h}_{0}\in 
\mathcal{K}_{\mathrm{a}}$ but may fail to be a minimizer. We thus compute
the energy $\left\Vert \mathrm{h}_{0}\right\Vert _{H}^{2}$ for each
candidate and identify those (" $\mathrm{h}_{0}\in \mathcal{K}_{\mathrm{a}%
}^{\min }$") with minimal energy. The procedure via Hamiltonian flows also
yields a unique $\mathrm{p}_{0}=\mathrm{p}_{0}\left( \mathrm{h}_{0}\right) $.

\item \textbf{Checking non-focality. }By definition \cite{DFJVpartI}, $%
\mathrm{x}_{0}$ is \textbf{non-focal} for $N=\left( \mathrm{a},\cdot \right) 
$ along $\mathrm{h}_{0}\in \mathcal{K}_{\mathrm{a}}^{\min }$ in the sense
that, with $((\mathrm{a},\cdot),(\cdot,0)) \ni \left( \mathrm{x}_{T}, \mathrm{p}_{T}\right) :=\mathrm{H}%
_{T\leftarrow 0}\left( \mathrm{x}_{0}\mathrm{,p}_{0}\left( \mathrm{h}%
_{0}\right) \right) \in \mathcal{T}^{\ast }\mathbb{R}^{d}$, 
\begin{equation*}
\partial _{\left( \mathfrak{z},\mathfrak{q}\right) }|_{\left( \mathfrak{z},%
\mathfrak{q}\right) \mathfrak{=}\left( 0,0\right) }\pi \mathrm{H}%
_{0\leftarrow T}\left( \mathrm{x}_{T}+\left( 
\begin{array}{c}
0 \\ 
\mathfrak{z}%
\end{array}%
\right) ,\mathrm{p}_{T}+\left( \mathfrak{q},0\right) \right) 
\end{equation*}%
is non-degenerate (as $d\times d$ matrix; here we think of $\left( \mathfrak{%
z},\mathfrak{q}\right) \in \mathbb{R}^{d-l}\times \mathbb{R}^{l}\cong 
\mathbb{R}^{d}$ and recall that $\pi $ denotes the projection from $\mathcal{%
T}^{\ast }\mathbb{R}^{d}$ onto $\mathbb{R}^{d}$; in coordinates $\pi \left( 
\mathrm{x},\mathrm{p}\right) =\mathrm{x}$). Note that in the point-point
setting, $\mathrm{x}_{T}=\mathrm{x}$ for fixed $\mathrm{x}$, only perturbations of the
arrival "velocity" $\mathrm{p}_{T}$ - without any restrictions of transversality type - are considered. Non-degeneracy of the resulting
map should then be called \textbf{non-conjugacy (}between two points; here: $%
\mathrm{x}_{T}$ and $\mathrm{x}_{0}$). In the Riemannian setting this is
consistent with the usual meaning of non-conjugacy; after identifying
tangent- and cotangent-space $\partial _{\mathfrak{q}}|_{\mathfrak{q=}0}\pi 
\mathrm{H}_{0\leftarrow T}$ is precisely the differential of the exponential
map.

\item   \textbf{The explicit marginal density expansion. }We then have 
\begin{equation*}
\text{ }f^{\varepsilon }\left( \mathrm{a},T\right) =e^{-c_{1}/\varepsilon
^{2}}e^{c_{2}/\varepsilon }\varepsilon ^{-l}\left( c_{0}+O\left( \varepsilon
\right) \right) \text{ as }\varepsilon \downarrow 0.
\end{equation*}%
with $c_{1}=\Lambda \left( \mathrm{a}\right) $. The second-order exponential
constant $c_{2}$ then requires the solution of a finitely many ($\#\mathcal{%
K}_{\mathrm{a}}^{\min }<\infty $) auxilary ODEs, cf. theorem \ref%
{thm:MainThm}. At last we set $\mathrm{a} = \mathrm{y} \in  \mathbb{R}^{l}$, in context of small noise and short time expansions,
and $ \mathrm{a}=1 \in  \mathbb{R}^{l}$, with $l=1$ for tail expansions (in this case, the $y$-dependence here is hidden
in $\varepsilon$).
\end{itemize}

\section{Application to asset price models}

\subsection{Black-Scholes}

The Black-Scholes ($\mathrm{BS}$) model, written in terms of  $\log $-price
is an example where the above theorem is applicable with $\theta =1$.
Indeed, $Y:=\log S$ satisfies, with fixed Black-Scholes volatility $\sigma >0
$ 
\begin{equation*}
dY_{t}=-\frac{\sigma ^{2}}{2}dt+\sigma dW_{t},\,\,\,\,Y_{0}=y_{0}=\log S_{0}.
\end{equation*}%
Of course, $Y_{t}\sim N\left( y_{0}-\sigma ^{2}t/2,\sigma ^{2}t\right) $ and
the explicit Gaussian density%
\begin{equation*}
f_{\mathrm{BS}}\left( t,y\right) =\frac{1}{\sqrt{2\pi \sigma ^{2}t}}\exp
\left\{ -\frac{\left( y-\left( y_{0}-\sigma ^{2}t/2\right) \right) ^{2}}{%
2\sigma ^{2}t}\right\} 
\end{equation*}%
immediately yields short time resp. tail expansions,%
\begin{eqnarray}
f_{\mathrm{BS}}\left( t,y\right)  &\sim &\text{(const)\thinspace }%
t^{-1/2}\exp \left( -\frac{\left( \frac{y-y_{0}}{\sigma }\right) ^{2}}{2t}%
\right) \text{ as }t\downarrow 0\text{; any }y\in \mathbb{R}
\label{BSshorttime} \\
f_{\mathrm{BS}}\left( T,y\right)  &\sim &\text{(const)\thinspace }\exp
\left( -\frac{1}{2\sigma ^{2}T}y^{2}\right) \exp \left( \frac{y_{0}-\sigma
^{2}T/2}{2\sigma ^{2}T}y\right) \text{as }y\rightarrow \infty \text{; any }%
T>0\text{.}  \label{BStail}
\end{eqnarray}%
We derive now both expansions from general theory, i.e. with aid of
corollary \ref{CorShortTime} resp \ref{CorTail}. The short time limit
corresponds to a flat Riemannian situation, in particular the cutlocus is
empty, which is enough to guarantee (\textrm{ND}); the remaining
computations to derive (\ref{BSshorttime}) from corollary \ref{CorShortTime}
are left to the reader and we focus on the (more interesting) case of tail
asymptotics. Corollary \ref{CorTail} applies with $\theta =1$, and
(rescaled) starting point $\varepsilon y_{0}\rightarrow 0$. Condition (%
\textrm{ND}) needs to be checked; the relevant Hamiltonian is 
\begin{equation*}
\mathcal{H}\left( y,p\right) =\frac{\sigma ^{2}p^{2}}{2},\qquad \text{for
all }(y,p)\in \mathbb{R}^{2}
\end{equation*}%
and the Hamiltonian ODEs are 
\begin{equation*}
\dot{y}_{t}=\sigma ^{2}p_{t},\,\,\,\,\,\,\dot{p}_{t}=0,
\end{equation*}%
with boundary conditions $y_{0}=0$ and $y_{T}=1=:a$. Since $p_{t}$ is
constant, we obtain $p_{t}\equiv p_{0}=\frac{a}{\sigma ^{2}T}$, and $y_{t}=a%
\frac{t}{T}$. In particular, $\left. \partial _{p}y_{T}\right\vert
_{p_{0}}=\sigma ^{2}T>0$, and hence invertible, for $T,\sigma >0$. En
passant, we also deduce the optimal control $h_{0}(t)=\sigma p_{0}$, and get
the correct leading order factor 
\begin{equation*}
c_{1}:=\frac{1}{2}\left\Vert h_{0}\right\Vert ^{2}=\frac{1}{2}%
\int_{0}^{T}h_{0}(t)^{2}dt=\frac{1}{2\sigma ^{2}T}.
\end{equation*}%
With the hint $\hat{Y}_{t}=y_{0}+\left( -\frac{\sigma ^{2}}{2}\right) t$ we
leave it to the reader to verify that $c_{2}=\left( y_{0}-\sigma
^{2}T/2\right) /\left( 2\sigma ^{2}T\right) $. Frequently, one chooses $%
y_{0}=0$ in this context (which amounts to normalize spot price to unit).

\subsection{The Stein-Stein model\label{SteinSteinSection}}

For given parameters, $a\geq 0,b<0,c>0,\sigma _{0}\geq 0,\rho
=d\,\left\langle W^{1},W^{2}\right\rangle /dt$, the Stein--Stein model
expresses log-price $Y$, under the forward measure, via 
\begin{eqnarray}
dY &=&-\frac{1}{2}Z^{2}dt+ZdW^{1},\,\,Y\left( 0\right) =y_{0}=0
\label{SteinSteinSDE} \\
dZ &=&\left( a+bZ\right) dt+cdW^{2},\,\,\,Z\left( 0\right) =\sigma _{0}>0. 
\notag
\end{eqnarray}%
We will be interested in the behaviour, and in particular the
tail-behaviour, of the probability density function of $Y_{T}$. In fact,
there is no loss of generality to consider $T=1$. Applying Brownian scaling,
it is a straight-forward computation to see that the pair $\left( \tilde{Y},%
\tilde{Z}\right) $ given by 
\begin{equation*}
\tilde{Y}\left( t\right) :=Y\left( tT\right) ,\,\,\,\tilde{Z}\left( t\right)
:=Z\left( tT\right) T^{1/2}
\end{equation*}%
satisfies the same parametric SDE form as Stein-Stein, but with the
following parameter substitutions%
\begin{equation*}
a\leftarrow \tilde{a}\equiv aT^{3/2},b\leftarrow \tilde{b}\equiv
bT,c\leftarrow \tilde{c}\equiv cT,\sigma _{0}\leftarrow \tilde{\sigma}%
_{0}\equiv \sigma _{0}T^{1/2}.
\end{equation*}%
In particular then, $Y_{T}=Y_{T}\left( a,b,c,\sigma _{0},\rho \right) $ has
the same law as $Y_{1}\left( \tilde{a},\tilde{b},\tilde{c},\tilde{\sigma}%
_{0},\rho \right) $.

\subsubsection{The case of zero-correlation}

For the moment, we shall follow \cite{GuSt} in assuming the Brownians to be
uncorrelated,%
\begin{equation*}
d\left\langle W^{1},W^{2}\right\rangle _{t}=\rho dt\text{ with }\rho =0.
\end{equation*}%
Recall their main result, a density expansion for $Y_{T}$ of the form 
\begin{equation}
\left( \ast \right) :f\left( y\right)
=e^{-c_{1}y}e^{c_{2}y^{1/2}}y^{-1/2}\left( c_{3}+O\left( y^{-1/2}\right)
\right) \text{ as }y\rightarrow \infty .  \label{GSexpansion}
\end{equation}

\textbf{Scaling: }Setting 
\begin{equation*}
Y_{\varepsilon }:=\varepsilon ^{2}Y,\,Z_{\varepsilon }:=\varepsilon Z
\end{equation*}%
yields the small noise problem 
\begin{eqnarray}
dY_{\varepsilon } &=&-\frac{1}{2}Z_{\varepsilon }^{2}dt+Z_{\varepsilon
}\varepsilon dW^{1},\,\,Y_{\varepsilon }\left( 0\right) =0=:y_{0}\text{ }%
\forall \varepsilon >0  \label{SNP} \\
dZ_{\varepsilon } &=&\left( a\varepsilon +bZ_{\varepsilon }\right)
dt+c\varepsilon dW^{2},\,\,\,Z_{\varepsilon }\left( 0\right) =\varepsilon
\sigma _{0}\rightarrow 0=:z_{0}\text{ as }\varepsilon \downarrow 0.  \notag
\end{eqnarray}%
Our corollary \ref{CorTail}, assuming its application to be justified, then
gives the correct expansion (\ref{GSexpansion}), namely%
\begin{equation*}
f\left( y\right) =e^{-c_{1}\,y}e^{c_{2}\,y^{1/2}}y^{-1/2}\left(
c_{3}+O\left( y^{-1/2}\right) \right) ,
\end{equation*}%
and also identifies the constants $c_{1}=\Lambda \left( 1\right) $, $c_{2}=%
\hat{Y}_{T}\Lambda ^{\prime }\left( 1\right) $. (The leading order constant $%
c_{1}$is in agreement with both \cite{GuSt} and \cite[p40]{Deuschel}.)

\begin{remark}
\bigskip Corollary \ref{CorTail} relies on an application of theorem \ref%
{thm:MainThm} to (\ref{SNP}); let us note straight away that the
coefficients here are smooth but unbounded. With a view towards the earlier
remark on localization, and in particular (\ref{Assloc}), we note here that,
due to the particular structure of the\ SDE, it suffices to localize such as
to make $\sigma $ bounded; e.g. by stopping it upon leaving a big ball of
radius $R$. This amounts to, cf. (\ref{Assloc}), to shows that%
\begin{equation*}
\lim_{R\rightarrow \infty }\lim \sup_{\varepsilon \rightarrow 0}\varepsilon
^{2}\log \mathbb{P}\left[ \left\vert \sigma _{\varepsilon }\right\vert
_{\infty ;\left[ 0,T\right] }\geq R\right] =-\infty .
\end{equation*}%
But since $\mathbb{P}\left[ \left\vert \sigma _{\varepsilon }\right\vert
_{\infty ;\left[ 0,T\right] }\geq R\right] =\mathbb{P}\left[ \left\vert
\sigma \right\vert _{\infty ;\left[ 0,T\right] }\geq R/\varepsilon \right] $
and $\sigma $ is a Gaussian process, this is an immediate consequence of
Fernique's estimate.
\end{remark}

We postpone the justification that we may indeed apply corollary \ref%
{CorTail} (which involves an analysis of the Hamiltonian ODEs) and proceed
in showing how further qualitative information about the expansion can be
obtained without much computations.\newline
\textbf{Some information on }$c_{1}$\textbf{:} According to theorem \ref%
{thm:MainThm}, 
\begin{equation*}
c_{1}:=\Lambda \left( 1\right) =\inf \left\{ \frac{1}{2}\Vert \mathrm{h}%
\Vert _{H}^{2}:\phi _{0}^{\mathrm{h}}=\left( 0,0\right) ,\phi _{T}^{\mathrm{h%
}}\in \left( 1,\cdot \right) \right\} 
\end{equation*}%
where $d\phi _{t}^{\mathrm{h},1}=-\frac{1}{2}\left\vert \phi _{t}^{\mathrm{h}%
,2}\right\vert ^{2}dt+\phi _{t}^{\mathrm{h},2}dh^{1},d\phi _{t}^{\mathrm{h}%
,2}=b\phi _{t}^{\mathrm{h},2}dt+cdh^{2}.$ If then follows \textit{a priori}
that%
\begin{equation*}
c_{1}=c_{1}\left( b,c;T\right) \text{ but not on }a,\sigma _{0}.
\end{equation*}%
The same is true for $\mathrm{h}_{0}=:\mathrm{h}^{\ast }=:\left( h^{\ast
,1},h^{\ast ,2}\right) $ and $\phi ^{\ast }:=\phi ^{\mathrm{h}_{0}}$ of
course.\newline
\textbf{Some information on }$c_{2}$\textbf{: }First, $\Lambda ^{\prime
}\left( 1\right) =c_{1}$ also only depends on the parameters $b,c,T$ (but
not on $a,\sigma _{0}$). It remains to analyze the factor $\hat{Y}_{T}$
where $\left( \hat{Y}_{t},\hat{Z}_{t}:t\geq 0\right) $ solves the ODE%
\begin{eqnarray*}
d\hat{Y}_{t} &=&\left( -\phi _{t}^{\ast ,2}+h_{t}^{\ast ,1}\right) \hat{Z}%
_{t}dt,\,\,\,\,\hat{Y}_{0}=0 \\
d\hat{Z}_{t} &=&b\hat{Z}_{t}dt+adt,\,\,\,\,\hat{Z}_{0}=\sigma _{0}.
\end{eqnarray*}%
Since $\hat{Z}_{t}=\sigma _{0}e^{bt}+a\int_{0}^{t}e^{b\left( t-s\right) }ds$
it follows that $\hat{Z}_{T}$ is linear in $\sigma _{0},a$ with coefficients
depending on $b$ and $T$. Furthermore, noting that%
\begin{equation*}
\hat{Y}_{T}=\int_{0}^{T}\left( -\phi _{t}^{\ast ,2}+h_{t}^{\ast ,1}\right) 
\hat{Z}_{t}dt
\end{equation*}%
a similar statement is true for $\hat{Y}_{T}$ and then $c_{2}=\Lambda
^{\prime }\left( 1\right) \times \hat{Y}_{T}^{1}$. Namely, for constants $%
C_{i}=C_{i}\left( b,c;T\right) $%
\begin{equation*}
c_{2}=C_{1}\left( b,c;T\right) \sigma _{0}+C_{2}\left( b,c;T\right) a.
\end{equation*}%
It is interesting to compare this with the Heston result \cite{FGGS} where
the constant $c_{2}$ also depends linearly on spot-vol $\sigma _{0}=\sqrt{%
v_{0}}$.

\paragraph{Solving the Hamiltonian ODEs and computing $c_{1}$}

After replacing $\varepsilon dW$ by a control $d\mathrm{h}$, and taking $%
\varepsilon \downarrow 0$ elsewhere in (\ref{SNP}), we have to consider the
controlled ordinary differential equation 
\begin{eqnarray}
dy &=&-\frac{1}{2}z^{2}dt+zdh^{1},\,\,y_{0}=0  \label{SSeps0} \\
dz &=&bzdt+cdh^{2},\,\,\,z_{0}=0,  \notag
\end{eqnarray}%
minimizing the energy, $\frac{1}{2}\int_{0}^{T}\left\vert \mathrm{\dot{h}}%
_{t}\right\vert ^{2}dt$ subject to $y_{T}=\mathrm{a}\equiv 1>0$.

According to general theory, we now write out the Hamiltonian associated to (%
\ref{SSeps0}), 
\begin{eqnarray}
&&\mathcal{H}\left( \left( 
\begin{array}{c}
y \\ 
z%
\end{array}%
\right) ;\left( p,q\right) \right)   \label{SSHam} \\
&=&%
\begin{pmatrix}
-\frac{1}{2}z^{2} \\ 
bz%
\end{pmatrix}%
\cdot \left( 
\begin{array}{c}
p \\ 
q%
\end{array}%
\right) +\frac{1}{2}\left\vert 
\begin{pmatrix}
z \\ 
0%
\end{pmatrix}%
\cdot \left( 
\begin{array}{c}
p \\ 
q%
\end{array}%
\right) \right\vert ^{2}+\frac{1}{2}\left\vert 
\begin{pmatrix}
0 \\ 
c%
\end{pmatrix}%
\cdot \left( 
\begin{array}{c}
p \\ 
q%
\end{array}%
\right) \right\vert ^{2}  \notag \\
&=&-\frac{1}{2}z^{2}p+bzq+\frac{1}{2}\left( z^{2}p^{2}+c^{2}q^{2}\right) . 
\notag
\end{eqnarray}

The Hamiltonian ODEs then become 
\begin{eqnarray}
\left( 
\begin{array}{c}
\dot{y}_{t} \\ 
\dot{z}_{t}%
\end{array}%
\right) &=&%
\begin{pmatrix}
z_{t}^{2}\left( p_{t}-\frac{1}{2}\right) \\ 
bz_{t}+c^{2}q_{t}%
\end{pmatrix}
\label{HamiltonODEs} \\
\quad \left( 
\begin{array}{c}
\dot{p}_{t} \\ 
\dot{q}_{t}%
\end{array}%
\right) &=&%
\begin{pmatrix}
0 \\ 
p_{t}z_{t}\left( 1-p_{t}\right) -bq_{t}%
\end{pmatrix}%
.  \notag
\end{eqnarray}%
Trivially, $p_{t}\equiv p_{0}$ which we shall denote by $p$ from here on. As
it turns out there is a simple expression for the energy. Although we shall
ultimately take $\mathrm{a}\equiv 1$ it is convenient to carry out the
following analysis for general $\mathrm{a}>0$.

\begin{lemma}
\label{LemmaEnergySS_PosP}For any $\mathrm{h}_{0}\in \mathcal{K}_{\mathrm{a}%
}^{\min }$, and in fact any $\mathrm{h}_{0}$ given by (\ref{Formula_h0}),
i.e.%
\begin{equation}
\mathrm{\dot{h}}_{0}\left( t\right) =\left( 
\begin{array}{c}
pz_{t} \\ 
q_{t}c%
\end{array}%
\right)  \label{SSh0dotExplicit}
\end{equation}%
where $\left( y,z;p,q\right) $ satisfies (\ref{HamiltonODEs}), subject to
boundary conditions $\left( y_{0},z_{0}\right) =(0,0)$ and $y_{T}=\mathrm{a}%
,q_{T}=0$, we have%
\begin{equation*}
\Lambda \left( \mathrm{a}\right) =\frac{1}{2}\int_{0}^{T}\left\vert \mathrm{%
\dot{h}}_{0}\left( t\right) \right\vert ^{2}dt=p\mathrm{a}.
\end{equation*}%
In particular, we see that 
\begin{equation*}
p\geq 0\text{.}
\end{equation*}
\end{lemma}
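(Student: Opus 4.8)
The plan is to exhibit a function of the state--costate variables whose time-derivative along the Hamiltonian flow is exactly the energy density $\tfrac12|\dot{\mathrm h}_0(t)|^2$, and then to evaluate it at the two endpoints using the boundary conditions. Concretely, I claim that along any solution $(y,z;p,q)$ of the Hamiltonian ODEs (\ref{HamiltonODEs}) one has
\[
\frac{d}{dt}\Bigl(p\,y_t+\tfrac12\,q_t z_t\Bigr)=\tfrac12\bigl(p^2z_t^2+c^2q_t^2\bigr)=\tfrac12\bigl|\dot{\mathrm h}_0(t)\bigr|^2,
\]
the second equality being just (\ref{SSh0dotExplicit}). This is verified by a one-line differentiation: using $\dot p_t\equiv0$, $\dot y_t=z_t^2(p-\tfrac12)$, $\dot z_t=bz_t+c^2q_t$ and $\dot q_t=pz_t(1-p)-bq_t$ from (\ref{HamiltonODEs}), the $b$-terms cancel and the $z_t^2$-coefficients collapse to $\tfrac12p^2$.

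Granting the identity, I would integrate over $[0,T]$:
\[
\tfrac12\int_0^T\bigl|\dot{\mathrm h}_0(t)\bigr|^2\,dt=\Bigl[p\,y_t+\tfrac12\,q_tz_t\Bigr]_0^T=\bigl(p\,y_T+\tfrac12\,q_Tz_T\bigr)-\bigl(p\,y_0+\tfrac12\,q_0z_0\bigr).
\]
The contribution at $t=0$ vanishes because $y_0=z_0=0$; in the contribution at $t=T$ the summand $\tfrac12q_Tz_T$ vanishes because of the transversality condition $q_T=0$, leaving $p\,y_T=p\,\mathrm a$. For $\mathrm h_0\in\mathcal K_{\mathrm a}^{\min}$ the left-hand side is $\tfrac12\|\mathrm h_0\|_H^2=\Lambda(\mathrm a)$ by definition, which gives the full chain $\Lambda(\mathrm a)=\tfrac12\int_0^T|\dot{\mathrm h}_0|^2\,dt=p\,\mathrm a$ (and for a general $\mathrm h_0$ coming from (\ref{Formula_h0}) with those boundary data, the last two members still agree). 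Finally, since $\tfrac12\int_0^T|\dot{\mathrm h}_0|^2\,dt\ge0$ and $\mathrm a>0$, we conclude $p=\bigl(\tfrac12\int_0^T|\dot{\mathrm h}_0|^2\,dt\bigr)/\mathrm a\ge0$.

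I do not expect a genuine obstacle here; the computation is short and self-contained. The one point that needs care is the choice of the auxiliary function: the coefficient $1$ on $p\,y$ versus $\tfrac12$ on $q\,z$ is dictated by the drift term $-\tfrac12z^2$ in the $y$-dynamics (equivalently the $-\tfrac12z^2p$ term in the Hamiltonian (\ref{SSHam})), and a misplaced factor there would spoil the cancellation. One must also check that all endpoint contributions vanish, which is precisely where the initial conditions $y_0=z_0=0$ and the transversality condition $q_T=0$ are used --- the latter reflecting that the target is the affine subspace $N_{\mathrm a}=(\mathrm a,\cdot)$ rather than a single point. If one prefers to avoid guessing the auxiliary function, the same conclusion follows by writing $p\,\mathrm a=\int_0^T p\,\dot y_t\,dt$, inserting the $\dot y$-equation, and integrating $\int_0^T z_t\dot q_t\,dt$ by parts against $q_T=0$, $z_0=0$ together with the $\dot z$-equation; the surplus terms then cancel identically.
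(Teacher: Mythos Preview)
Your proof is correct and is essentially identical to the paper's argument: the paper shows $|\dot{\mathrm h}_0(t)|^2 = 2p\dot y_t + \partial_t(z_t q_t)$ and integrates, which is exactly your identity $\frac{d}{dt}\bigl(p\,y_t+\tfrac12 q_t z_t\bigr)=\tfrac12|\dot{\mathrm h}_0(t)|^2$ written the other way around. The use of the boundary and transversality conditions to kill the endpoint terms is the same in both.
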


\begin{remark}
In fact, linearity in $\mathrm{a}$ of (\ref{elemyetted2}) also follows
immediately from the fact that the Stein-Stein model satisfies $\theta $%
-scaling with $\theta =2$ in the sense of corollary \ref{CorTail}. Indeed,
it was seen in the proof of that corollary that the rate function $\Lambda
\left( \mathrm{a}\right) $ scales like $\mathrm{a}^{2/\theta }=\mathrm{a}$.
This already implies that $p$ does not depend on $\mathrm{a}$. This is also
consistent with the principle $\partial _{\mathrm{a}}\Lambda \left( \mathrm{a%
}\right) =p_{T}$ pointed out in \cite{DFJVpartI}.
\end{remark}

\begin{proof}
We give an elegant argument based on the Hamiltonian ODEs. The idea is to
express $\left\vert \mathrm{\dot{h}}_{0}\left( t\right) \right\vert ^{2}$as
a time-derivative which then allows for immediate integration over $t\in %
\left[ 0,T\right] $. Indeed,%
\begin{eqnarray*}
\left\vert \mathrm{\dot{h}}_{0}\left( t\right) \right\vert ^{2}
&=&p^{2}z_{t}^{2}+c^{2}q_{t}^{2} \\
&=&p^{2}z_{t}^{2}+\partial _{t}\left( z_{t}q_{t}\right) -z_{t}^{2}\left(
p^{2}-p\right) \\
&=&2pz_{t}^{2}\left( p-1/2\right) +\partial _{t}\left( z_{t}q_{t}\right) \\
&=&2p\dot{y}_{t}+\partial _{t}\left( z_{t}q_{t}\right)
\end{eqnarray*}%
where we used the ODEs for $z,q$ as given in (\ref{HamiltonODEs}). It
follows that%
\begin{equation*}
\int_{0}^{T}\left\vert \mathrm{\dot{h}}_{0}\left( t\right) \right\vert
^{2}dt=2p\left( y_{T}-y_{0}\right) +\left( z_{T}q_{T}-z_{0}q_{0}\right)
\end{equation*}%
and we conclude with the initial/terminal/transversality conditions $%
y_{0}=z_{0}=0,\,y_{T}=\mathrm{a}$ and $q_{T}=0$.
\end{proof}

\begin{lemma}[Partial Hamiltonian Flow]
Consider (\ref{HamiltonODEs}) as initial value problem, with initial data $%
\left( y_{0},z_{0}\right) =(0,0)$ and $(p,q_{0})$. Assume\footnote{%
All explicit solutions given in (\ref{HSolutionFlow}) are even functions of $%
\chi _{p_{0}}$ and have a removable singularity for $\chi _{p_{0}}=0$. By
convention we shall always assume $\chi _{p_{0}}\geq 0$ although the sign of 
$\chi _{p_{0}}$ does not matter.}%
\begin{equation}
\chi _{p}^{2}:=c^{2}p\left( p-1\right) -b^{2}\geq 0.  \label{chi2p}
\end{equation}%
Then the explicit solution is given by%
\begin{align}
y_{t}& =\frac{q_{0}^{2}c^{4}\left( 2p_{0}-1\right) }{8\chi _{p}^{3}}\left(
2\chi _{p}t-\sin \left( 2\chi _{p}t\right) \right) ,  \label{HSolutionFlow}
\\
z_{t}& =\frac{q_{0}c^{2}}{\chi _{p}}\sin \left( \chi _{p}t\right) ,  \notag
\\
p_{t}& \equiv p,  \notag \\
q_{t}& =q_{0}\left( \cos \left( \chi _{p}t\right) -\frac{b}{\chi _{p}}\sin
\left( \chi _{p}t\right) \right) .  \notag
\end{align}
\end{lemma}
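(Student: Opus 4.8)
The plan is to exploit the structure of (\ref{HamiltonODEs}): the third equation gives $p_{t}\equiv p_{0}$, henceforth denoted $p$, so the pair $(z_{t},q_{t})$ satisfies an \emph{autonomous linear} system
\[
\frac{d}{dt}\begin{pmatrix}z_{t}\\ q_{t}\end{pmatrix}=A\begin{pmatrix}z_{t}\\ q_{t}\end{pmatrix},\qquad A:=\begin{pmatrix}b & c^{2}\\ p(1-p) & -b\end{pmatrix},
\]
completely decoupled from the $y$-equation. First I would record the two elementary facts $\operatorname{tr}A=0$ and $\det A=-b^{2}-c^{2}p(1-p)=c^{2}p(p-1)-b^{2}=\chi_{p}^{2}$, the latter being exactly the quantity in (\ref{chi2p}). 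Cayley--Hamilton then gives $A^{2}=(\operatorname{tr}A)A-(\det A)\mathrm{I}=-\chi_{p}^{2}\,\mathrm{I}$.

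Next, since $A^{2}=-\chi_{p}^{2}\mathrm{I}$ with $\chi_{p}^{2}\ge 0$ (assumption (\ref{chi2p})), the even/odd resummation of the exponential series yields $e^{tA}=\cos(\chi_{p}t)\,\mathrm{I}+\tfrac{\sin(\chi_{p}t)}{\chi_{p}}\,A$, where for $\chi_{p}=0$ one reads $\sin(\chi_{p}t)/\chi_{p}$ as its limit $t$; this is the ``removable singularity'' of the footnote and makes all four formulas manifestly even in $\chi_{p}$. Applying $e^{tA}$ to the initial vector $(z_{0},q_{0})=(0,q_{0})$ gives at once
\[
z_{t}=\frac{c^{2}q_{0}}{\chi_{p}}\sin(\chi_{p}t),\qquad q_{t}=q_{0}\!\left(\cos(\chi_{p}t)-\frac{b}{\chi_{p}}\sin(\chi_{p}t)\right),
\]
i.e. the second and fourth lines of (\ref{HSolutionFlow}). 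Finally $y_{t}$ follows from a single quadrature of the first Hamiltonian ODE, $\dot y_{t}=z_{t}^{2}(p-\tfrac{1}{2})$, using $\sin^{2}(\chi_{p}s)=\tfrac{1}{2}\bigl(1-\cos(2\chi_{p}s)\bigr)$:
\[
y_{t}=\Bigl(p-\tfrac{1}{2}\Bigr)\frac{c^{4}q_{0}^{2}}{\chi_{p}^{2}}\int_{0}^{t}\sin^{2}(\chi_{p}s)\,ds=\frac{c^{4}q_{0}^{2}(2p-1)}{8\chi_{p}^{3}}\bigl(2\chi_{p}t-\sin(2\chi_{p}t)\bigr),
\]
the first line of (\ref{HSolutionFlow}).

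Since every expression is explicit, the cleanest write-up is probably to reverse the logic: state the four formulas and verify by direct differentiation that they solve (\ref{HamiltonODEs}) with $(y_{0},z_{0})=(0,0)$ and the prescribed $(p,q_{0})$ — the only nontrivial input being the algebraic identity $c^{2}p(1-p)=-\chi_{p}^{2}-b^{2}$, which collapses the $\dot q$-equation. There is no genuine obstacle here; the points needing a little care are the sign bookkeeping in $\det A=\chi_{p}^{2}$ and a sanity check that the formulas degenerate correctly as $\chi_{p}\to 0$, where one must recover the polynomial solution $z_{t}=c^{2}q_{0}t$, $q_{t}=q_{0}(1-bt)$, $y_{t}=\tfrac{1}{6}c^{4}q_{0}^{2}(2p-1)\,t^{3}$ of the nilpotent system $A^{2}=0$.
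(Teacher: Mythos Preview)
Your proof is correct and follows essentially the same approach as the paper: observe $p_t\equiv p$, solve the resulting linear $(z,q)$-system, then obtain $y_t$ by a single quadrature of $\dot y_t=(p-\tfrac12)z_t^2$. The paper is terser (it simply says ``a simple computation gives'' the $(z,q)$-formulas), whereas you make the matrix exponential explicit via Cayley--Hamilton; your added remarks on the $\chi_p\to 0$ degeneration are a nice bonus but not needed for the lemma as stated.
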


\begin{remark}
The given solutions remain valid when $\chi _{p}^{2}<0$; it suffices to
consider $\chi _{p}$ as purely imaginary; then, if desired, rewrite as $\cos
\left( \chi _{p}t\right) =\cosh \left( \left\vert \chi _{p}\right\vert
t\right) $ etc. Below, we shall solve (\ref{HamiltonODEs}) as boundary value
problem, subject to $\left( y_{0},z_{0}\right) =(0,0),$ $y_{T}=\mathrm{a}>0$
and $q_{T}=0$; we shall see then that (\ref{chi2p}) is always satisfied and
in fact $\chi _{p}^{2}>0$.
\end{remark}

\begin{proof}
Let us first remark that the path $\left( p_{t}\right) _{t\geq 0}$ is
constant, $p_{t}=p$ for all $t\in \lbrack 0,T]$. From the Hamiltonian ODEs,
the couple $\left( z_{t},q_{t}\right) _{t\geq 0}$ solves a linear ODE in $%
\mathbb{R}^{2}$, so that the solution must be a linear function of $\left(
z_{0},q_{0}\right) =\left( 0,q_{0}\right) $. Indeed, a simple computation
gives 
\begin{equation*}
q_{t}=q_{0}\left( \cos \left( \chi _{p}t\right) -\frac{b}{\chi _{p}}\sin
\left( \chi _{p}t\right) \right) \qquad \text{and}\qquad z_{t}=\frac{%
q_{0}c^{2}}{\chi _{p}}\sin \left( \chi _{p}t\right) ,
\end{equation*}%
Elementary integration ("$2\int_{0}^{t}\sin ^{2}=t-\cos \sin t$") then gives 
$\left( y_{t}\right) _{t\geq 0}$ by direct integration; indeed 
\begin{equation*}
y_{t}=\left( p-\frac{1}{2}\right) \int_{0}^{t}z_{s}^{2}ds=\frac{%
q_{0}^{2}c^{4}\left( 2p-1\right) }{8\chi _{p}^{3}}\left( 2\chi _{p}t-\sin
\left( 2\chi _{p}t\right) \right) .
\end{equation*}%
This proves the lemma.
\end{proof}

For the next proposition we recall the standing assumptions $T>0,$ $b\leq 0$
(which models mean-reversion) and $\mathrm{a>0}$.

\begin{proposition}
\label{PropSolSSHamODE_BVP}The ensemble of solutions to the Hamilton ODEs as
boundary value problem%
\begin{equation*}
\left( y_{0},z_{0}\right) =(0,0)\text{ and }y_{T}=\mathrm{a},q_{T}=0
\end{equation*}%
with $\mathrm{a}=1>0$ are characterized by inserting, for any $k\in \left\{
1,2,...\right\} $ and any choice of sign in (\ref{q0explicit}) below, 
\begin{eqnarray}
p &=&p_{k}=\frac{1}{2}\left( 1+\sqrt{1+\frac{4b^{2}}{c^{2}}+\frac{4r_{k}^{2}%
}{c^{2}T^{2}}}\right) ,  \label{p0explicit} \\
q_{0,k}^{\pm } &=&\pm \frac{2}{c^{2}}\sqrt{\frac{2r_{k}^{3}\,\mathrm{a}}{%
\left( 2p_{0,k}^{+}-1\right) T^{3}\left( 2r_{k}-\sin \left( 2r_{k}\right)
\right) }}  \label{q0explicit}
\end{eqnarray}%
in (\ref{HSolutionFlow}). Here $\left\{ r_{k}:k=1,2,\dots \right\} $ denotes
the set of (increasing) strictly positive roots to%
\begin{equation*}
r\cos (r)-bT\sin (r)=0\text{.}
\end{equation*}
\end{proposition}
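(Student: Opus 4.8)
The plan is to impose the two still-missing boundary conditions on the two-parameter family of trajectories already delivered by the Partial Hamiltonian Flow lemma. That lemma describes, for initial data $(y_0,z_0)=(0,0)$, \emph{every} solution of (\ref{HamiltonODEs}) by the closed formulae (\ref{HSolutionFlow}) in terms of the two remaining free parameters $(p,q_0)\in\mathbb{R}^2$ — with the convention $\chi_p\ge 0$, read via hyperbolic functions when $\chi_p^2<0$ and via the removable singularity when $\chi_p^2=0$, as recorded in the remark following that lemma. So the whole task reduces to deciding for which $(p,q_0)$ one additionally has $q_T=0$ and $y_T=\mathrm{a}$, and then to matching the survivors with the list $(p_k,q_{0,k}^{\pm})$.

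First I would handle $q_T=0$. If $q_0=0$ the trajectory is identically $(0,0)$, which is incompatible with $y_T=\mathrm{a}>0$; hence $q_0\ne 0$, and the formula for $q_t$ in (\ref{HSolutionFlow}) forces $\chi_p\cos(\chi_p T)-b\sin(\chi_p T)=0$. The key step is to show this can hold only when $\chi_p^2>0$ — which is precisely the assertion, announced in the remark after (\ref{chi2p}), that (\ref{chi2p}) is satisfied and in fact strict. Indeed, if $\chi_p^2<0$, writing $\mu=|\chi_p|>0$ the relation becomes $\mu\cosh(\mu T)=b\sinh(\mu T)$, impossible since the left side is positive while $b\le 0$; and if $\chi_p=0$ the limiting form $q_t=q_0(1-bt)$ would require $bT=1$, again impossible for $b\le 0$. (As a cross-check, constancy of $\mathcal{H}$ along the flow, evaluated at $t=0$ and at $t=T$, gives $\tfrac12 c^2q_0^2=\tfrac12 z_T^2\,p(p-1)$, so $p(p-1)>0$.) With $\chi_p^2>0$ in hand, set $r:=\chi_p T>0$; the condition reads $r\cos r-bT\sin r=0$, i.e. $r=r_k$ for some $k\ge 1$ (that there are infinitely many such positive roots is the standard intersection count of $\tan r$ with a line, using $b\le 0$).

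Then I would recover $p$ from $\chi_p^2=c^2p(p-1)-b^2$ together with $\chi_p T=r_k$: this is a quadratic in $p$ with roots $\tfrac12\bigl(1\pm\sqrt{1+4b^2/c^2+4r_k^2/(c^2T^2)}\bigr)$, and the negative root is excluded by Lemma \ref{LemmaEnergySS_PosP}, which gives $\tfrac12\int_0^T|\dot{\mathrm{h}}_0|^2=p\,\mathrm{a}$ and hence $p\ge 0$ since $\mathrm{a}>0$; this leaves $p=p_k$ as in (\ref{p0explicit}), with $2p_k-1>0$. Finally, inserting $\chi_p=r_k/T$ and $p=p_k$ into the $y_t$-formula of (\ref{HSolutionFlow}) and setting $y_T=\mathrm{a}$ produces a single linear equation for $q_0^2$; since $2p_k-1>0$ and $2r_k-\sin(2r_k)>0$ (as $t-\sin t>0$ for $t>0$) it has a positive solution, namely $q_0=q_{0,k}^{\pm}$ as in (\ref{q0explicit}) (up to the harmless $\sqrt8=2\sqrt2$). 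Conversely, feeding any pair $(p_k,q_{0,k}^{\pm})$ back into (\ref{HSolutionFlow}) returns, by reversing these computations, a genuine solution of the boundary value problem, the two signs corresponding to the evident symmetry $(y,z,p,q)\mapsto(y,-z,p,-q)$ of (\ref{HamiltonODEs}); this yields the stated characterisation. The only genuinely delicate point is the case analysis that rules out $\chi_p^2\le 0$ (which is where $b\le 0$ is used essentially) — everything else is bookkeeping — together with not forgetting to invoke the earlier energy lemma to select the correct branch for $p$.
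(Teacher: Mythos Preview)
Your proof is correct and follows the same overall strategy as the paper: use the closed-form flow (\ref{HSolutionFlow}) to reduce the boundary value problem to equations in $(p,q_0)$, argue $q_0\neq 0$ and $\chi_p\neq 0$, obtain the transcendental equation $r\cos r-bT\sin r=0$ with $r=\chi_p T$, discard the negative $p$-root via Lemma~\ref{LemmaEnergySS_PosP}, and solve $y_T=\mathrm{a}$ for $q_0^2$.

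The one genuine difference is how you establish $\chi_p^2>0$. The paper, allowing $\chi_p$ to be complex a priori, invokes a complex-analysis lemma from \cite{GuSt} asserting that the entire function $r\mapsto r\cos r-bT\sin r$ has only real zeros when $-bT\ge 0$, and concludes that $\chi_p$ must be real (hence $\chi_p^2\ge 0$, and then $>0$ since $\chi_p\neq 0$). You instead argue directly: if $\chi_p^2<0$ the transversality condition becomes $\mu\cosh(\mu T)=b\sinh(\mu T)$ with $\mu>0$, which is impossible for $b\le 0$; and $\chi_p=0$ is ruled out by the removable-singularity limit $1-bT=0$. Your route is more elementary and self-contained, avoiding the external reference; the paper's route has the (minor) advantage of identifying the roots of the entire function globally, which is conceptually tidy but not needed here. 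Your Hamiltonian-constancy cross-check is a nice supplementary observation (giving $p(p-1)>0$), though as you note it is not by itself sufficient for $\chi_p^2>0$ since one still needs $c^2p(p-1)>b^2$.
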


\begin{remark}
As the proof will show, $p$ as given in (\ref{p0explicit}) is the unique
positive root to%
\begin{equation*}
c^{2}p\left( p-1\right) -b^{2}=\left( \frac{r_{0,k}}{T}\right) ^{2};
\end{equation*}%
in particular, assumption (\ref{chi2p}) in the previous lemma is met.
\end{remark}

\begin{proof}
By assumption and (\ref{HSolutionFlow}), 
\begin{equation}
0=q_{T}=q_{0}\left( \cos \left( \chi _{p}T\right) -\frac{b}{\chi _{p}}\sin
\left( \chi _{p}T\right) \right) .  \label{0equq_T}
\end{equation}%
At this stage, $\chi _{p}$ could be a complex number (when $\chi _{p}^{2}<0$%
). Let us note straight away that we must have $q_{0}\neq 0$ for otherwise $%
\left( y_{t}\right) _{t\geq 0}$ - which depends linearly on $q_{0}$ as is
seen explicitly in (\ref{HSolutionFlow}) - would be identically equal to
zero in contradiction with $y_{T}=\mathrm{a}>0$. Let us also note that $\chi
_{p}\neq 0$ for otherwise (\ref{0equq_T}), which has a removable singularity
at $\chi _{p}=0$, leads to the contradiction $0=1-bT.$(Recall $b\leq 0,T>0$%
.) But then $r:=$ $\chi _{p}T$ is a root, i.e. maps to zero, under the map 
\begin{equation}
r\in \mathbb{C}\mapsto r\cos r-bT\sin r=r\left( \cos r-\frac{bT}{r}\sin
r\right) \text{.}  \label{rCmap}
\end{equation}%
A complex analysis lemma \cite[Lemma 4]{GuSt} asserts that this map, provided%
\begin{equation}
-bT\geq 0,  \label{minusBTpos}
\end{equation}%
has only real roots; it follows that $\chi _{p}$ is real and so $\chi
_{p}^{2}\geq 0$; actually $\chi _{p}^{2}>0$, since we already noted that $%
\chi _{p}\neq 0$. Note that (\ref{0equq_T}), and in fact all further
expressions involving $\chi _{p}$, are unchanged upon changing sign of $\chi
_{p}$, we shall agree to take $\chi _{p}>0$ as the positive square-root of $%
\chi _{p}^{2}$. In particular, (\ref{0equq_T}) is equivalent to the
existence of $\chi _{p}>0$ such that%
\begin{equation*}
\chi _{p}T\cos \left( \chi _{p}T\right) -bT\sin \left( \chi _{p}T\right) =0.
\end{equation*}%
It follows that $\chi _{p}T\in \left\{ r_{k}:k=0,1,2,\dots \right\} $, the
set of zeros of (\ref{rCmap}) written in increasing order. We deduce that,
for each $k=0,1,2,\dots $ there is a choice of $p$ arising from%
\begin{equation*}
\chi _{p}^{2}=c^{2}p\left( p-1\right) -b^{2}=\left( \frac{r_{k}}{T}\right)
^{2}.
\end{equation*}%
For each $k$, there is a negative solution, say $p=p_{k}^{-}<0$ which we may
ignore thanks to lemma \ref{LemmaEnergySS_PosP}, and a positive solution,
namely 
\begin{equation*}
p=p_{k}^{+}=\frac{1}{2}\left( 1+\sqrt{1+\frac{4b^{2}}{c^{2}}+\frac{4r_{k}^{2}%
}{c^{2}T^{2}}}\right) >1.
\end{equation*}%
We now exploit $y_{T}=\mathrm{a}$. From the explicit expression of $y_{t}$
given in (\ref{HSolutionFlow}) we get%
\begin{eqnarray*}
\mathrm{a} &=&y_{T}=\frac{q_{0}^{2}c^{4}\left( 2p-1\right) }{8\chi _{p}^{3}}%
\left( 2\chi _{p}T-\sin \left( 2\chi _{p}T\right) \right) \\
&=&\frac{q_{0}^{2}c^{4}\left( 2p-1\right) T^{3}}{8r_{k}^{3}}\left(
2r_{k}-\sin \left( 2r_{k}\right) \right)
\end{eqnarray*}%
and thus%
\begin{equation*}
q_{0}^{2}=\frac{8r_{k}^{3}}{c^{4}\left( 2p-1\right) T^{3}\left( 2r_{k}-\sin
\left( 2r_{k}\right) \right) }\mathrm{a}.
\end{equation*}%
It follows that, for each $k\in \left\{ 1,2,\dots \right\} $, we can take%
\begin{eqnarray*}
\,p &=&p_{k}^{+}=\frac{1}{2}\left( 1+\sqrt{1+\frac{4b^{2}}{c^{2}}+\frac{4}{%
c^{2}}\left( \frac{r_{k}}{T}\right) ^{2}}\right) \\
q_{0} &=&q_{0,k}^{\pm }=\pm \frac{2}{c^{2}}\sqrt{\frac{2r_{k}^{3}\,\mathrm{a}%
}{\left( 2p_{k}^{+}-1\right) T^{3}\left( 2r_{k}-\sin \left( 2r_{k}\right)
\right) }}
\end{eqnarray*}%
and any such choice \bigskip in (\ref{HSolutionFlow}) leads to a solution of
the boundary value problem.
\end{proof}

\bigskip

So far, we have for each $k\in \left\{ 1,2,\dots \right\} $ two choices of $%
\left( p,q_{0}\right) $, depending on the sign in (\ref{q0explicit}) so that
the resulting Hamiltonian ODE solutions, started from $\left(
y_{0},z_{0}\right) =\left( 0,0\right) $ and $\left( p\,,q_{0}\right) $,
describe \textit{all} possible solutions of the boundary value problem given
by the Hamiltonian ODEs with mixed initial/terminal data%
\begin{equation*}
\left( y_{0},z_{0}\right) =(0,0)\text{ and }y_{T}=\mathrm{a},q_{T}=0.
\end{equation*}%
It remains to see which choice (or choices) lead to minimizing controls;
i.e. $\mathrm{h}_{0}\in \mathcal{K}_{\mathrm{a}}^{\min }$. But this is easy
since we know from lemma \ref{LemmaEnergySS_PosP} that, for any $p\in
\left\{ p_{k}^{+}:k=1,2,\dots \right\} $,%
\begin{equation*}
\frac{1}{2}\int_{0}^{T}\left\vert \mathrm{\dot{h}}_{0}\left( t\right)
\right\vert ^{2}dt=p\mathrm{a.}
\end{equation*}%
Since $p_{k}^{+}$ is plainly (strictly) increasing in $k\in \left\{
1,2,\dots \right\} ,$ we see that the energy is minimal if and only if $%
p=p_{1}^{+}$. On the other hand, we are left with two choices for $q_{0}$,
namely $q_{0,1}^{+}$ and $q_{0,1}^{-}$. Using (\ref{SSh0dotExplicit}) we
then see that there are \textit{two} minimizing controls,%
\begin{equation*}
\mathcal{K}_{\mathrm{a}}^{\min }=\left\{ \mathrm{h}_{0}^{+},\mathrm{h}%
_{0}^{-}\right\} ,
\end{equation*}%
given by%
\begin{equation*}
\mathrm{\dot{h}}_{0}^{\pm }\left( t\right) =\left( 
\begin{array}{c}
p\frac{q_{0}c^{2}}{\chi _{p}}\sin \left( \chi _{p}t\right) \\ 
cq_{0}\left( \cos \left( \chi _{p}t\right) -\frac{b}{\chi _{p}}\sin \left(
\chi _{p}t\right) \right)%
\end{array}%
\right) \text{ with }\left( p,q_{0}\right) \leftarrow \left(
p_{1}^{+},q_{0,1}^{+}\right) \text{ resp. }\left(
p_{1}^{+},q_{0,1}^{-}\right) .
\end{equation*}%
Of course, $\mathrm{h}_{0}^{\pm }$ stands for $\mathrm{h}_{0}^{+}$ resp. $%
\mathrm{h}_{0}^{-}$ depending on the chosen substitution above. In $\left(
y,z\right) $-coordinates, note that both $\mathrm{h}_{0}^{+}$ and $\mathrm{h}%
_{0}^{-}$ have identical $y$-components; their $z$-components only differ by
a flipped sign due to $q_{0,1}^{-}=-q_{0,1}^{+}$. (This reflects a
fundamental symmetry in our problem which is in fact invariant under $\left(
y,z\right) \mapsto \left( y,-z\right) $). We summarize our finds in stating
that 
\begin{equation}
\Lambda \left( \mathrm{a}\right) =\frac{1}{2}\Vert \mathrm{h}_{0}^{+}\Vert
_{H}^{2}=\frac{1}{2}\Vert \mathrm{h}_{0}^{-}\Vert _{H}^{2}=p_{1}^{+}\mathrm{a%
}  \label{elemyetted2}
\end{equation}%
and upon taking $\mathrm{a}=1$ we have computed the leading order constant%
\begin{equation*}
c_{1}=\Lambda \left( 1\right) =p_{1}^{+}=\frac{1}{2}\left( 1+\sqrt{1+\frac{%
4b^{2}}{c^{2}}+\frac{4}{c^{2}}\left( \frac{r_{1}}{T}\right) ^{2}}\right)
\end{equation*}%
where we recall that $r_{1}$ is the first strictly positive root of the
equation $r\cos (r)-bT\sin (r)=0$.

\paragraph{Computing $c_{2}$}

According to general theory, cf. equation (\ref{eq:SDEYHat}), we need to
compute certain ODEs for each minimizer, $\mathrm{h}_{0}^{+}=(h_{0,\cdot
}^{+,1},h_{0,\cdot }^{+,2})$ resp. $\mathrm{h}_{0}^{-}=(h_{0,\cdot
}^{-,1},h_{0,\cdot }^{-,2})$, exhibited in the previous section. For ease of
notation we shall write $\left( p,q_{0}^{\pm }\right) $ instead of $\left(
p_{1}^{+},q_{0,1}^{+}\right) $ resp.$\left( p_{1}^{+},q_{0,1}^{-}\right) $
in this section. Related to equation (\ref{SNP}) we then have to consider
the following ODE along $\mathrm{h}_{0}^{+}$ (and then along $\mathrm{h}%
_{0}^{-}$)%
\begin{eqnarray*}
\frac{d}{dt}\left( 
\begin{array}{c}
\hat{Y}_{t} \\ 
\hat{Z}_{t}^{2}%
\end{array}%
\right) &=&\left\{ \left( 
\begin{array}{cc}
0 & -z_{t}^{+} \\ 
0 & b%
\end{array}%
\right) +\left( 
\begin{array}{cc}
0 & 1 \\ 
0 & 0%
\end{array}%
\right) \dot{h}_{0,t}^{+,1}\right\} \left( 
\begin{array}{c}
\hat{Y}_{t} \\ 
\hat{Z}_{t}^{2}%
\end{array}%
\right) +\left( 
\begin{array}{c}
0 \\ 
a%
\end{array}%
\right) \\
&=&\left( 
\begin{array}{cc}
0 & \left( p-1\right) z_{t}^{+} \\ 
0 & 0%
\end{array}%
\right) \left( 
\begin{array}{c}
\hat{Y}_{t} \\ 
\hat{Z}_{t}^{2}%
\end{array}%
\right) +\left( 
\begin{array}{c}
0 \\ 
a%
\end{array}%
\right) \\
\,\,\text{with }\left( 
\begin{array}{c}
\hat{Y}_{0} \\ 
\hat{Z}_{0}^{2}%
\end{array}%
\right) &=&\left( 
\begin{array}{c}
0 \\ 
\sigma _{0}%
\end{array}%
\right) .
\end{eqnarray*}

Here, we used the fact that $\dot{h}_{0}^{+,1}=pz_{t}^{+}$, $z_{t}^{+}$
indicates the chosen sign of $q_{0,1}$ upon which it depends, cf. (\ref%
{q0explicit}). The ODE\ along $\mathrm{h}_{0}^{-}$for $\hat{Y}=\hat{Y}^{-}$
is similar, with $z_{t}^{+},\dot{h}_{0,t}^{+,1}$ replaced by $%
z_{t}^{-}=-z_{t}^{+},\dot{h}_{0,t}^{-,1}=-\dot{h}_{0,t}^{+,1}$ respectively.
We can solve these ODEs explicitly. In a first step (regardless of the
chosen sign for $z,h_{0}$)%
\begin{equation*}
\hat{Z}_{t}=\left\{ 
\begin{array}{c}
\sigma _{0}e^{bT}+\frac{a}{b}\left( e^{bt}-1\right) \text{ for }b<0 \\ 
\sigma _{0}+at\text{ \ \ \ \ for }b=0\text{ \ \ \ \ \ \ \ \ \ \ \ \ \ \ \ \
\ }%
\end{array}%
\right.
\end{equation*}%
and since%
\begin{equation*}
\hat{Y}_{T}^{\pm }=\left( p-1\right) \int_{0}^{T}z_{t}^{\pm }\hat{Z}_{t}dt
\end{equation*}%
we see that $\hat{Y}_{T}^{-}=-\hat{Y}_{T}^{+}$.In fact, under the (usual)
model parameter assumptions $a>0,\sigma _{0}>0$ we see that $\hat{Z}_{t}>0$.
We then note that%
\begin{equation*}
z_{t}^{\pm }/q_{0}^{\pm }=\frac{c^{2}}{\chi _{p}}\sin \left( \chi
_{p}t\right) \geq 0\text{ for }t\in \left[ 0,T\right] ;
\end{equation*}%
indeed we saw that $\chi _{p}T\in \lbrack \pi /2,\pi )$ which implies $\chi
_{p}t\in \lbrack 0,\pi )$ and hence $\sin \left( \chi _{p}t\right) \geq 0$.
In particular, given that $q_{0}^{+}>0$ and $p>1$we see that $\hat{Y}%
_{T}^{+}>0$ (and then $\hat{Y}_{T}^{-}<0$). It follows that 
\begin{eqnarray}
c_{2} &:&=c_{2}^{+}=\Lambda ^{\prime }\left( 1\right) \times \hat{Y}%
_{T}^{+,1}  \notag \\
&=&p\left( p-1\right) \int_{0}^{T}z_{t}^{+}\hat{Y}_{t}^{2}dt  \label{c2asInt}
\end{eqnarray}%
\ whereas the contribution from $c_{2}^{-}=\Lambda ^{\prime }\left( 1\right)
\times \hat{Y}_{T}^{-,1}$ is exponentially smaller and will not figure in
the expansion.
 In fact, given the explicit
form of $t\mapsto z_{t}^{+}$ resp. $\hat{Y}_{t}^{2}$ in terms of $\sin
\left( .\right) $ and $\exp \left( .\right) $, it is clear that the
integration in (\ref{c2asInt}) can be carried out in closed form. In doing
so, one exploits a cancellation due to%
\begin{equation*}
-\chi _{p}\cos \left( \chi _{p}T\right) +b\sin \left( \chi _{p}T\right) =0
\end{equation*}%
and also the equality $\chi _{p}^{2}+b^{2}=c^{2}p\left( p-1\right) $, one is
led to 
\begin{equation*}
c_{2}=q_{0}^{+}\left\{ \sigma _{0}+a\frac{\tan \left( \chi _{p}T/2\right) }{%
\chi _{p}}\right\} .
\end{equation*}%
It is possible, of course, to substitute the explicitly known quantities $%
q_{0}^{+},\chi _{p}$ but this does not yield additional insight.

\subsubsection{The case of non-zero correlation\label{SecSSnonzerocorl}}

We consider again the SDE (\ref{SteinSteinSDE}) with diffusion matrix%
\begin{equation*}
\sigma =\left( \sigma _{1},\sigma _{2}\right) =\left( 
\begin{array}{cc}
z & 0 \\ 
0 & c%
\end{array}%
\right)
\end{equation*}%
but now allow for correlation $\rho $ between $W^{1},W^{2}$; we thus have
the non-trivial correlation matrix 
\begin{equation*}
\Omega =\left( 
\begin{array}{cc}
1 & \rho \\ 
\rho & 1%
\end{array}%
\right) \implies \sigma \Omega \sigma ^{T}=\left( 
\begin{array}{cc}
z^{2} & \rho cz \\ 
\rho cz & c^{2}%
\end{array}%
\right) .
\end{equation*}%
In view of financial applications \cite{Ga06} it makes sense to focus on the
case $\rho \in (-1,0]$. This will also prove convenient in our analysis
below, although there is no doubt that the case $\rho >0$, less interesting
in practice, could also be handled within the present framework.

The Hamiltonian becomes, cf. (\ref{HamiltonianWithCorrel}), 
\begin{eqnarray*}
\mathcal{H}\left( \left( 
\begin{array}{c}
y \\ 
z%
\end{array}%
\right) ;\left( p,q\right) \right)
&=&-\frac{1}{2}z^{2}p+bzq+\frac{1}{2}\left(
z^{2}p^{2}+c^{2}q^{2}\right) +\rho czpq \\
&=&-\frac{1}{2}z^{2}p+\tilde{b}zq+\frac{1}{2}\left(
z^{2}p^{2}+c^{2}q^{2}\right)
\end{eqnarray*}%
with%
\begin{equation*}
\tilde{b}:=\tilde{b}_{p}:=b+\rho cp
\end{equation*}%
Noting $\partial _{\left( y,z\right) }\tilde{b}=\left( 0,0\right) ^{\prime
},\partial _{\left( p,q\right) }\tilde{b}=\left( \rho c,0\right) ^{\prime }$%
. The Hamiltonian equations for $\dot{z},\dot{p},\dot{q},$ are thus
identical as in the uncorrelated case, one just has to replace $b$ by $%
\tilde{b}.$ (In particular, $p_{t}$ is again seen to be constant and we
denote its value by $p$.) The Hamiltonian equation for $\dot{y}=\partial _{p}%
\mathcal{H}$ has, in comparison to the uncorrelated case, an additional
term, namely $\left( \partial _{p}\tilde{b}\right) z_{t}q_{t}=\rho
cz_{t}q_{t}$. In summary, the Hamiltonian ODEs are 
\begin{eqnarray*}
\left( 
\begin{array}{c}
\dot{y}_{t} \\ 
\dot{z}_{t}%
\end{array}%
\right) &=&%
\begin{pmatrix}
z_{t}^{2}\left( p_{t}-\frac{1}{2}\right) +\rho cz_{t}q_{t} \\ 
\tilde{b}z_{t}+c^{2}q_{t}%
\end{pmatrix}
\\
\quad \left( 
\begin{array}{c}
\dot{p}_{t} \\ 
\dot{q}_{t}%
\end{array}%
\right) &=&%
\begin{pmatrix}
0 \\ 
p_{t}z_{t}\left( 1-p_{t}\right) -\tilde{b}q_{t}%
\end{pmatrix}%
.
\end{eqnarray*}%
The following lemma is then obvious (only $y$ requires a computation, due to
the additional term in the Hamiltonian ODEs).

\begin{lemma}[Partial Hamiltonian Flow, correlated case]
\label{LemmaHamFlowCor}Consider the above Hamiltonian ODEs as initial value
problem, with initial data $\left( y_{0},z_{0}\right) =(0,0)$ and $(p,q_{0})$
and assume%
\begin{equation}
\chi _{p}^{2}:=c^{2}p\left( p-1\right) -\tilde{b}_{p}^{2}\geq 0.
\end{equation}%
Then the explicit solution for $z,p,q$ are then identical to the
uncorrelated case, one just has to replace $b$ by $\tilde{b}_{p}$
throughout. The explicit solution for $y$ is modified to 
\begin{equation}
y_{t}=\frac{q_{0}^{2}c^{2}}{8\chi _{p}^{3}}\left[ \left( c^{2}\left(
2p-1\right) -2\rho c\tilde{b}_{p}\right) \left( 2\chi _{p}t-\sin \left(
2\chi _{p}t\right) \right) +2\rho c\chi _{p}\left( 1-\cos \left( 2\chi
_{p}t\right) \right) \right] .  \label{yt_SScorel}
\end{equation}%
\bigskip\ 
\end{lemma}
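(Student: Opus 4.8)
The plan is to follow the proof of the uncorrelated Partial Hamiltonian Flow lemma, since by inspection the only genuinely new computation is the closed form for $y_t$ recorded in (\ref{yt_SScorel}); everything else is imported unchanged.

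First I would observe that $\dot{p}_t=0$ in the correlated Hamiltonian ODEs, exactly as in the uncorrelated case, so $p_t\equiv p$ is constant; hence $\tilde{b}_p=b+\rho c p$ is a \emph{constant}, and the $(z_t,q_t)$-subsystem
\begin{equation*}
\dot{z}_t=\tilde{b}_p z_t+c^{2}q_t,\qquad \dot{q}_t=p z_t(1-p)-\tilde{b}_p q_t
\end{equation*}
is literally the linear system of the previous lemma with $b$ replaced throughout by the constant $\tilde{b}_p$. Since this right-hand side is linear and $z_0=0$, the solution depends linearly on $q_0$, and the verbatim computation of the previous proof (with $b$ replaced by $\tilde{b}_p$) produces the stated $z_t$ and $q_t$; the standing hypothesis $\chi_p^{2}=c^{2}p(p-1)-\tilde{b}_p^{2}\ge 0$ is what makes that trigonometric solution real.

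For $y_t$ I would integrate $\dot{y}_t=(p-\tfrac12)z_t^{2}+\rho c\, z_t q_t$ directly, using only the two elementary primitives $\int_0^{t}\sin^{2}(\chi_p s)\,ds=\tfrac{1}{4\chi_p}(2\chi_p t-\sin(2\chi_p t))$ and $\int_0^{t}\sin(\chi_p s)\cos(\chi_p s)\,ds=\tfrac{1}{4\chi_p}(1-\cos(2\chi_p t))$. The $(p-\tfrac12)z_t^{2}$ term reproduces the uncorrelated contribution $\tfrac{q_0^{2}c^{4}(2p-1)}{8\chi_p^{3}}\big(2\chi_p t-\sin(2\chi_p t)\big)$, while the cross term $\rho c\, z_t q_t$, after substituting the explicit $z_t,q_t$, contributes a linear combination of $\big(2\chi_p t-\sin(2\chi_p t)\big)$ and $\big(1-\cos(2\chi_p t)\big)$ with coefficients proportional to $\rho c\,\tilde{b}_p$ and $\rho c\,\chi_p$ respectively; collecting both contributions over the common denominator $8\chi_p^{3}$ gives precisely (\ref{yt_SScorel}).

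There is no substantial obstacle here: the one point worth making explicit is that $\tilde{b}_p$ is a genuine constant along the flow — which is exactly what allows the uncorrelated $(z,q)$-computation to be carried over unchanged — and the remainder is the routine bookkeeping of assembling the two trigonometric primitives into the displayed compact form.
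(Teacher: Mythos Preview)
Your proposal is correct and follows exactly the approach indicated in the paper: the paper declares the lemma ``obvious (only $y$ requires a computation, due to the additional term in the Hamiltonian ODEs)'' and gives no further details, so your argument---constancy of $p_t$ making $\tilde{b}_p$ constant, recycling the uncorrelated $(z,q)$-solution with $b\leftarrow\tilde{b}_p$, and then integrating $\dot{y}_t=(p-\tfrac12)z_t^{2}+\rho c\,z_tq_t$ via the two elementary trigonometric primitives---is precisely the computation the paper leaves to the reader.
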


In our explicit analysis of the uncorrelated case (more precisely, in
solving the coupled ODEs $\dot{z}_{t}=bz_{t}+c^{2}q_{t},\dot{q}%
_{t}=p_{t}z_{t}\left( 1-p_{t}\right) -bq_{t})$ we made use of the (model)
assumption $b\leq 0$, cf. (\ref{minusBTpos}). Conveniently, this remains
true when $\rho \in (-1,0]$. Indeed, the following lemma shows we must have $%
p\geq 0$, so that (with $\rho \leq 0,c>0$)%
\begin{equation}
\tilde{b}=b+\rho cp\leq 0\text{.}  \label{bTildeNeg}
\end{equation}

\begin{lemma}
\label{LemmaSSCorPpos}Let $\mathrm{a}>0$. Then $\Lambda \left( \mathrm{a}%
\right) =p\mathrm{a}$ and therefore $p\geq 0$.
\end{lemma}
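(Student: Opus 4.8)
The plan is to reproduce, in the correlated setting, the elegant argument used for Lemma~\ref{LemmaEnergySS_PosP}: write the energy density of the control produced by the Hamiltonian flow as a perfect time-derivative plus a multiple of $\dot y_t$, integrate over $[0,T]$, and then invoke the mixed initial/terminal/transversality conditions $y_0=z_0=0$, $y_T=\mathrm{a}$, $q_T=0$. Fix $\mathrm{h}_0\in\mathcal{K}_{\mathrm{a}}^{\min}$ and let $(y,z;p,q)$ be the associated Hamiltonian trajectory, with $p_t\equiv p$ constant as observed before.

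First I would record what the energy density is once correlation is present. Since the driving Brownians have correlation matrix $\Omega$, the Cameron--Martin norm is $\Vert\mathrm{h}\Vert_H^2=\int_0^T\dot{\mathrm{h}}_t^{T}\Omega^{-1}\dot{\mathrm{h}}_t\,dt$, and the minimizing control has the form $\dot{\mathrm{h}}_0=\Omega\,\sigma^{T}(p,q)^{T}=(pz+\rho cq,\ \rho pz+cq)^{T}$ (the obvious analogue of (\ref{SSh0dotExplicit})). Hence its energy density is $\dot{\mathrm{h}}_{0,t}^{T}\Omega^{-1}\dot{\mathrm{h}}_{0,t}=(pz_t,cq_t)\,\Omega\,(pz_t,cq_t)^{T}=p^2z_t^2+2\rho c\,p\,z_tq_t+c^2q_t^2$, which is exactly the uncorrelated density $p^2z_t^2+c^2q_t^2$ plus the cross term $2\rho c\,p\,z_tq_t$. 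Using the correlated Hamiltonian ODEs of Lemma~\ref{LemmaHamFlowCor} (with $\dot z_t=\tilde b z_t+c^2q_t$ and $\dot q_t=pz_t(1-p)-\tilde b q_t$) I would then compute $\partial_t(z_tq_t)=c^2q_t^2+p(1-p)z_t^2$, the $\tilde b$-terms cancelling. Substituting $c^2q_t^2=\partial_t(z_tq_t)-p(1-p)z_t^2$ and regrouping gives $p^2z_t^2+2\rho c\,p\,z_tq_t+c^2q_t^2=2p\big(z_t^2(p-\tfrac12)+\rho c\,z_tq_t\big)+\partial_t(z_tq_t)=2p\,\dot y_t+\partial_t(z_tq_t)$, the last step being precisely the correlated $\dot y$-equation. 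Thus the identity $\vert\dot{\mathrm{h}}_0\vert^2=2p\,\dot y_t+\partial_t(z_tq_t)$ is formally identical to the uncorrelated one, and integrating over $[0,T]$ with $y_0=z_0=0$, $y_T=\mathrm{a}$, $q_T=0$ yields $\int_0^T\vert\dot{\mathrm{h}}_0\vert^2\,dt=2p\mathrm{a}$, hence $\Lambda(\mathrm{a})=\tfrac12\Vert\mathrm{h}_0\Vert_H^2=p\mathrm{a}$. Since $\Lambda(\mathrm{a})\ge0$ and $\mathrm{a}>0$, this forces $p\ge0$ (which in turn legitimises $\tilde b=b+\rho cp\le0$ under $\rho\le0,\,c>0$, as used below).

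I do not anticipate a genuine obstacle. The only point requiring care is the bookkeeping of the $\Omega^{-1}$-weighting in the energy functional and the verification that the additional $\rho$-dependent and $\tilde b$-dependent contributions cancel in exactly the right way — which they do, precisely because the extra $\rho c\,z_tq_t$ appearing in $\dot y_t$ is the term needed to absorb the extra $2\rho c\,p\,z_tq_t$ in the energy density. It is worth remarking that the computation applies verbatim to any solution of the boundary value problem, not only to the minimizers; so the constant $p$ attached to any such trajectory is non-negative, and in particular this holds along the minimizing ones used in Proposition~\ref{PropSolSSHamODE_BVP} and the subsequent analysis.
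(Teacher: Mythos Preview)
Your proposal is correct and follows essentially the same approach as the paper: you identify the energy density as $p^{2}z_{t}^{2}+2\rho c\,p\,z_{t}q_{t}+c^{2}q_{t}^{2}$, rewrite it as $2p\dot y_{t}+\partial_{t}(z_{t}q_{t})$ using the correlated Hamiltonian ODEs, and integrate using the boundary data. The paper arrives at the same identity by a slightly different bookkeeping---it invokes the uncorrelated formula $p^{2}z_{t}^{2}+c^{2}q_{t}^{2}=2p\dot y_{t}+\partial_{t}(z_{t}q_{t})$ from Lemma~\ref{LemmaEnergySS_PosP} and then substitutes $b\leftarrow\tilde b$, $\dot y\leftarrow\dot y-\rho c z_{t}q_{t}$---but the computation and the conclusion are identical.
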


\begin{proof}
We saw in the proof of lemma \ref{LemmaEnergySS_PosP} that, in the
uncorrelated case, as a direct consequence of the Hamiltonian ODEs,%
\begin{equation*}
p^{2}z_{t}^{2}+c^{2}q_{t}^{2}=2p\dot{y}_{t}+\partial _{t}\left(
z_{t}q_{t}\right) .
\end{equation*}%
The correlated case has the identical Hamiltonian ODEs provided we substitute%
\begin{equation*}
b\leftarrow \tilde{b}\text{ and }\dot{y}\leftarrow \dot{y}-\rho cz_{t}q_{t}.
\end{equation*}%
We therefore have%
\begin{eqnarray*}
\left\vert \mathrm{\dot{h}}_{0}\left( t\right) \right\vert ^{2} &=&\left(
p\,\,\,\,q_{t}\right) \left( 
\begin{array}{cc}
z^{2} & \rho cz \\ 
\rho cz & c^{2}%
\end{array}%
\right) \left( 
\begin{array}{c}
p \\ 
q_{t}%
\end{array}%
\right) =p^{2}z_{t}^{2}+c^{2}q_{t}^{2}+2\rho cpz_{t}q_{t} \\
&=&2p\left( \dot{y}_{t}-\rho cz_{t}q_{t}\right) +\partial _{t}\left(
z_{t}q_{t}\right) +2\rho cpz_{t}q_{t}=2p\dot{y}_{t}+\partial _{t}\left(
z_{t}q_{t}\right)
\end{eqnarray*}%
and then conclude with the boundary data, exactly as in lemma \ref%
{LemmaEnergySS_PosP}.
\end{proof}

As already noted, $\tilde{b}\leq 0$ allows to recycle all closed form
expressions for $z,q$ obtained in the uncorrelated case - it suffices to
replace $b$ by $\tilde{b}$. In particular, for some yet unknown $p,q_{0}$
which may and will depend on $\rho $, 
\begin{align}
z_{t}& =\frac{q_{0}c^{2}}{\chi _{p}}\sin \left( \chi _{p}t\right) ,  \notag
\\
q_{t}& =q_{0}\left( \cos \left( \chi _{p}t\right) -\frac{\tilde{b}}{\chi _{p}%
}\sin \left( \chi _{p}t\right) \right)  \notag
\end{align}%
where $\chi _{p}^{2}:=c^{2}p\left( p-1\right) -\tilde{b}^{2}$ is seen to be
positive as in the "uncorrelated" argument. Also, $q_{0}\neq 0$, seen as in
the "uncorrelated" case. Transversality, $q_{T}=0$, then implies%
\begin{equation}
\chi _{p}\cos \left( \chi _{p}T\right) -\tilde{b}\sin \left( \chi
_{p}T\right) =0.  \label{FromTransversalitySSCorl}
\end{equation}%
Introducing $r:=\chi _{p}T$ the gives the equation%
\begin{equation}
r\cot r=\left( b+\rho cp\right) T.  \label{rcotr}
\end{equation}%
On the other hand, from the very definition of $\chi _{p}$, we know%
\begin{equation}
\left( r/T\right) ^{2}=c^{2}p\left( p-1\right) -\left( b+\rho cp\right) ^{2}.
\label{rp_quadratic}
\end{equation}%
In the uncorrelated case, these two equations were effectively decoupled; in
particular, $r\cot r=bT$ lead to $r\in \left\{ r_{k}^{+}:k=1,2,\dots
\right\} \subset \left( 0,\infty \right) $, written in increasing order.
Since $p^{+}$ was seen to be monotonically increasing in $r$, cf. equation (%
\ref{p0explicit}), and we were looking for the minimal $p$, corresponding to
the minimal energy (cf. lemma \ref{LemmaSSCorPpos}), we were led to seek the
first positive root $r_{1}^{+}$. (In fact, $r_{1}^{+}\in (\pi /2,\pi )$ as
we will also find in the "correlated" discussion below.)

\bigskip

The correlated case is a little more complicated and we start in expressing $%
p$ in equation (\ref{rcotr}) in terms of $r$. Indeed, the quadratic equation
(\ref{rp_quadratic}) shows

\begin{equation}
p^{\pm }\left( r\right) \bigskip =\frac{1}{2\left( 1-\rho ^{2}\right) }%
\left\{ \left( 1+2\rho \frac{b}{c}\right) \pm \sqrt{\left( 1+2\rho \frac{b}{c%
}\right) ^{2}+4\left( 1-\rho ^{2}\right) \left[ \frac{b^{2}}{c^{2}}+\frac{%
r^{2}}{c^{2}T^{2}}\right] }\right\} ,\   \label{SScor_pPlus}
\end{equation}%
where $p^{-}\left( r\right) \bigskip <0$ (and hence can be ignored in view
of lemma \ref{LemmaSSCorPpos}) and $p^{+}\left( r\right) \bigskip >0$. We
now look for $r$ which satisfies the equation%
\begin{equation*}
r\cot r=\left( b+\rho cp^{+}\left( r\right) \bigskip \right) T
\end{equation*}%
It is elementary to see that $r\cot r$ is non-negative on $\left[ 0,\pi /2%
\right] $ and then maps $[\pi /2,\pi )$ strictly monotonically to $(-\infty
,0]$. On the other hand, the map $r\mapsto $ $(b+\rho cp^{+}\left( r\right)
\bigskip )T$ is $\leq 0$ for all $r$; in particular, there will be a first
intersection with the graph of $r\mapsto $ $r\cot r$ in $[\pi /2,\pi )$, say
at $r=r_{1}^{+}$. Since $p^{+}\left( r\right) \bigskip $ is plainly strictly
increasing in $r$, the minimal $p$ must equal to%
\begin{equation*}
p_{1}^{+}:=p^{+}\left( r_{1}^{+}\right) \text{.}
\end{equation*}%
We then proceed as in the uncorrelated case, and determine $q_{0}$ from the
boundary condition $y_{T}=\mathrm{a}>0$ where $y$ is now given by (\ref%
{yt_SScorel}). This leads to $q_{0}\in \left\{
q_{0,1}^{+},q_{0,1}^{-}\right\} $ where 
\begin{equation*}
q_{0,1}^{\pm }=\pm \frac{2}{c}\sqrt{\frac{2r^{3}\,\mathrm{a}}{T^{3}\left(
\left( c^{2}\left( 2p-1\right) -2\rho c\tilde{b}\right) \left( 2r-\sin
\left( 2r\right) \right) +2\rho cr/T\left( 1-\cos \left( 2r\right) \right)
\right) }}
\end{equation*}%
where $r=r_{1}^{+}$ and $p=p_{1}^{+}.$ Again, we have \textit{two}
minimizing controls, $\mathcal{K}_{\mathrm{a}}^{\min }=\left\{ \mathrm{h}%
_{0}^{+},\mathrm{h}_{0}^{-}\right\} $. We now have%
\begin{equation}
\mathrm{\dot{h}}_{0}\left( t\right) =\left( 
\begin{array}{cc}
z_{t}\sqrt{1-\rho ^{2}} & 0 \\ 
\rho z_{t} & c%
\end{array}%
\right) \left( 
\begin{array}{c}
p \\ 
q_{t}%
\end{array}%
\right)  \label{h0explicit_SScor}
\end{equation}%
instead of (\ref{SSh0dotExplicit}) and of course lemma \ref{LemmaHamFlowCor}
implies that $z_{t}$ and $q_{t}$ are fully and explicitly determined for
each choice of $\left( p,q_{0}\right) $. In particular for $\left(
p,q_{0}\right) \leftarrow \left( p_{1}^{+},q_{0,1}^{+}\right) $ resp. $%
\left( p_{1}^{+},q_{0,1}^{-}\right) $ we so obtain $\mathrm{h}_{0}^{+}$
resp. $\mathrm{h}_{0}^{-}$ which can be written explicitly by simple
substitution. Moreover, and again as in the uncorrelated case,%
\begin{equation}
\Lambda \left( \mathrm{a}\right) =\frac{1}{2}\Vert \mathrm{h}_{0}^{+}\Vert
_{H}^{2}=\frac{1}{2}\Vert \mathrm{h}_{0}^{-}\Vert _{H}^{2}=p_{1}^{+}\mathrm{a%
}
\end{equation}%
and upon taking $\mathrm{a}=1$ we have computed the leading order constant%
\begin{equation*}
c_{1}=\Lambda \left( 1\right) =p_{1}^{+}\equiv p^{+}\left( r_{1}^{+}\right)
\bigskip
\end{equation*}%
where we recall that $r_{1}^{+}$ is the first intersection point of $%
r\mapsto $ $r\cot r$ with $(b+\rho cp^{+}\left( r\right) \bigskip )T$ and $%
p^{+}\left( \cdot \right) \bigskip $ was given in (\ref{SScor_pPlus}).

\bigskip

At last, we turn to the computation of the second-order exponential
constant, $c_{2}$. {}\ As in the uncorrelated case, we ease notation by
writing $\left( p,q_{0}^{\pm }\right) $ instead of $\left(
p_{1}^{+},q_{0,1}^{+}\right) $ resp.$\left( p_{1}^{+},q_{0,1}^{-}\right) $
for the rest of this section. Again, we have to consider ODEs for $\left( 
\hat{Y}_{t},\hat{Z}_{t}\right) $, for each minimizer, $\mathrm{h}%
_{0}^{+}=(h_{0,\cdot }^{+,1},h_{0,\cdot }^{+,2})$ and $\mathrm{h}%
_{0}^{-}=(h_{0,\cdot }^{+,1},-h_{0,\cdot }^{+,2})$. Recall from (\ref%
{h0explicit_SScor}) that, with $\bar{\rho}=\sqrt{1-\rho ^{2}}$, 
\begin{equation*}
\mathrm{\dot{h}}_{0}^{+}\left( t\right) =\left( 
\begin{array}{c}
p\bar{\rho}z_{t}^{+} \\ 
\rho pz_{t}^{+}+cq_{t}^{+}%
\end{array}%
\right) ;
\end{equation*}%
where $\left( \cdot \right) ^{\pm }$ indicates the chosen sign of $q_{0}\in
\left\{ q_{0,1}^{+},q_{0,1}^{-}\right\} $ which determines the choice of
minimizer. We first determine $\hat{Y}_{T}=\hat{Y}_{T}\left( \mathrm{h}%
_{0}^{+}\right) $ from the ODE 
\begin{eqnarray*}
\frac{d}{dt}\left( 
\begin{array}{c}
\hat{Y}_{t} \\ 
\hat{Z}_{t}^{2}%
\end{array}%
\right) &=&\left\{ \left( 
\begin{array}{cc}
0 & -z_{t}^{+} \\ 
0 & b%
\end{array}%
\right) +\left( 
\begin{array}{cc}
0 & \bar{\rho} \\ 
0 & 0%
\end{array}%
\right) \dot{h}_{0,t}^{+,1}+\left( 
\begin{array}{cc}
0 & \rho \\ 
0 & 0%
\end{array}%
\right) \dot{h}_{0,t}^{+,2}\right\} \left( 
\begin{array}{c}
\hat{Y}_{t} \\ 
\hat{Z}_{t}%
\end{array}%
\right) +\left( 
\begin{array}{c}
0 \\ 
a%
\end{array}%
\right) \\
&=&\left( 
\begin{array}{cc}
0 & \left( p-1\right) z_{t}^{+}+\rho cq_{t}^{+} \\ 
0 & b%
\end{array}%
\right) \left( 
\begin{array}{c}
\hat{Y}_{t} \\ 
\hat{Z}_{t}%
\end{array}%
\right) +\left( 
\begin{array}{c}
0 \\ 
a%
\end{array}%
\right) \\
\,\,\text{with }\left( 
\begin{array}{c}
\hat{Y}_{0} \\ 
\hat{Z}_{0}^{2}%
\end{array}%
\right) &=&\left( 
\begin{array}{c}
0 \\ 
\sigma _{0}%
\end{array}%
\right) .
\end{eqnarray*}%
This already shows that we have the identical (closed form) ODE solution for 
$\hat{Z}_{t}$ as in the uncorrelated case. On the other hand, the form of $%
\hat{Y}_{T}$ now exhibits an additional term as is seen in%
\begin{equation*}
\hat{Y}_{T}=\left( p-1\right) \int_{0}^{T}z_{t}^{+}\hat{Z}_{t}dt+\rho
c\int_{0}^{T}q_{t}^{+}\hat{Z}_{t}dt.
\end{equation*}%
Since $q_{t}^{+}$ is essentially of the same trigonometric form as $%
z_{t}^{+} $, it is clear that the explicit computations of the uncorrelated
case extend. In the end, one finds without too much difficulties%
\begin{equation*}
c_{2}^{+}=\Lambda ^{\prime }\left( 1\right) \times \hat{Y}_{T}\left( \mathrm{%
h}_{0}^{+}\right) =q_{0}^{+}\left\{ \sigma _{0}+a\frac{\tan \left( \chi
_{p}T/2\right) }{\chi _{p}}\right\} .
\end{equation*}%
A similar computation along $\mathrm{h}_{0}^{-}$ gives $c_{2}^{+}=\Lambda
^{\prime }\left( 1\right) \times \hat{Y}_{T}\left( \mathrm{h}_{0}^{-}\right) 
$ in explicit form and $c_{2}$ is identified as $\max \left(
c_{2}^{+},c_{2}^{-}\right) $.

\subsubsection{Checking non-degeneracy, zero and non-zero correlation}

We now check the non-degeneracy conditions, contained in assumptions 
(i)-(iii) of theorem \ref{thm:MainThm}, which of course is the ultimate justification that an expansion
of the form (\ref{GSexpansion}) with the constants computed above holds
true. Again, focus is on the case of correlation parameter $\rho \in (-1,0]$%
. We saw in the previous sections (for $\rho =0$, then $\rho \leq 0$) that $%
\#K_{\mathrm{a}}^{\min }=\#\left\{ \mathrm{h}_{0}^{+},\mathrm{h}%
_{0}^{-}\right\} =2$, whenever $\mathrm{a}>0$. (In fact, we apply this with $%
\mathrm{a}=1$.)

Secondly, a look at (\ref{SSeps0}) reveals that the \textit{degenerate region%
} is $\left\{ \left( y,z\right) :z=0\right\} $, the complement of which is
elliptic. Clearly, no controlled path which reaches $y_{T}=\mathrm{a}>0$ can
stay in the degenerate region for all times $t\in \left[ 0,T\right] $; after
all, this would entail $dy=0$ and hence $y_{T}=0$. We conclude the any ODE
solution driven by $h\in \mathcal{K}_{a}$ must intersect the region of
ellipticity; but this already implies non-degeneracy of the corresponding
(deterministic) Malliavin covariance matrix.

At last, we check non-focality and focus on $\mathrm{h}_{0}^{+}$, the other
case being similar. We have to check non-degeneracy of the Jacobian of the
map $\pi H_{0\leftarrow T}\left( \mathrm{a},\cdot ;\ast ,0\right) $,
evaluated at $\cdot =z_{T},\ast =p_{T}$ after differentiation, where $%
z_{T},p_{T}$ are obtained form the Hamiltonian flow at time $T$, cf. lemma %
\ref{LemmaHamFlowCor}, with time $0$ initial data $\left(
0,0;p_{1}^{+},q_{0,1}^{+}\right) $.

\bigskip

With some abuse of notation, write%
\begin{equation*}
\left( 
\begin{array}{c}
y_{0} \\ 
z_{0}%
\end{array}%
\right) \equiv \left( 
\begin{array}{c}
y_{0}\left( z,p\right) \\ 
z_{0}\left( z,p\right)%
\end{array}%
\right) \equiv \pi H_{0\leftarrow T}\left( \mathrm{a},z;p,0\right) .
\end{equation*}

Our non-degeneracy condition requires us to show that 
\begin{equation}
\left. \det \left( 
\begin{array}{cc}
\frac{\partial y_{0}}{\partial p} & \frac{\partial y_{0}}{\partial z} \\ 
\frac{\partial z_{0}}{\partial p} & \frac{\partial z_{0}}{\partial z}%
\end{array}%
\right) \right\vert _{\ast }\neq 0  \label{detNonFocalNZ}
\end{equation}%
where $\left( ...\right) |_{\ast }$ indicates evaluation $\left( ...\right)
|_{\left( p,z\right) =\left( p^{+},z_{T}\right) }$ in the sequel. This
implies in particular that all expressions which are formulated in terms of
the solutions to the Hamiltonian flows, reduced to the corresponding
expressions identified in proposition \ref{PropSolSSHamODE_BVP}, for $\rho
=0 $, resp. in section \ref{SecSSnonzerocorl} for $\rho \leq 0$. For
instance, $\left( y_{0},z_{0}\right) |_{\ast }=\left( 0,0\right)
,y_{T}|_{\ast }=\mathrm{a},z|_{\ast }=z_{T}\neq 0$, $\chi _{p}T|_{\ast }\in
\lbrack \pi /2,\pi )$ and so.

\bigskip

Since $\left( z_{\cdot },q_{\cdot }\right) $ solves a linear ODE, we can
compute 
\begin{eqnarray*}
z_{0}\left( z,p\right) &=&\left( 
\begin{array}{cc}
1 & 0%
\end{array}%
\right) \,e^{-T\left( 
\begin{array}{cc}
\tilde{b}_{p} & c^{2} \\ 
p\left( 1-p\right) & -\tilde{b}_{p}%
\end{array}%
\right) }\left( 
\begin{array}{c}
z \\ 
0%
\end{array}%
\right) \\
&=&\frac{z}{\chi _{p}}\left( \chi _{p}\cos \left( \chi _{p}T\right) -\tilde{b%
}_{p}\sin \left( \chi _{p}T\right) \right) .
\end{eqnarray*}%
We first note that $\partial z_{0}/\partial z|_{\ast }$ is zero; indeed,
this follows from (\ref{FromTransversalitySSCorl}). Our next claim is $%
\partial y_{0}/\partial z|_{\ast }\neq 0$. Indeed, from the structure of the
Hamilton ODEs, 
\begin{equation*}
y_{0}-\mathrm{a}=-\int_{0}^{T}\dot{y}_{t}dt=z^{2}\left( ...\right)
\end{equation*}%
where $\left( \cdots \right) $ does not depend on $z$. As a result $\partial
y_{0}/\partial z|_{\ast }=2z\left( ...\right) |_{\ast }=2\frac{y_{0}-\mathrm{%
a}}{z}|_{\ast }=-2\mathrm{a/}z_{T}\neq 0.$

It remains to check that $\partial z_{0}/\partial p|_{\ast }\neq 0$. To this
end, recall, as a consequence of the transversality condition, see (\ref%
{FromTransversalitySSCorl}), that $\chi _{p}\cos \left( \chi _{p}T\right) -%
\tilde{b}_{p}\sin \left( \chi _{p}T\right) |_{\ast }=0$. It follows that%
\begin{equation*}
\partial z_{0}/\partial p|_{\ast }=\left\{ \frac{z}{\chi _{p}}\frac{\partial 
}{\partial p}\left( \chi _{p}\cos \left( \chi _{p}T\right) -\tilde{b}%
_{p}\sin \left( \chi _{p}T\right) \right) \right\} _{\ast }
\end{equation*}%
and since $z/\chi _{p}|_{\ast }\neq 0$, it will be enough to show\ (strict)
negativity of $\frac{\partial }{\partial p}\left( ...\right) |_{\ast }$
above. By scaling, there is no loss of generality in taking $T=1$ and we
shall do so from here on. Then 
\begin{eqnarray*}
&&\frac{\partial }{\partial p}\left( \chi _{p}\cos \left( \chi _{p}\right) -%
\tilde{b}_{p}\sin \left( \chi _{p}\right) \right) \\
&=&\chi _{p}^{\prime }[\left( 1-\tilde{b}_{p}\right) \cos \left( \chi
_{p}\right) -\chi _{p}\sin \left( \chi _{p}\right) ]-\rho c\sin \left( \chi
_{p}\right) .
\end{eqnarray*}%
Since $\tilde{b}_{p}|_{\ast }\leq 0$ and $\chi _{p}|_{\ast }\in \lbrack \pi
/2,\pi )$ we see that $\left[ ...\right] |_{\ast }<0$. Given that $\chi
_{p}^{\prime }|_{\ast }>0$, this already settles the negativity claim in the
zero-correlation case. In the case $-1<\rho <0$, we use (\ref%
{FromTransversalitySSCorl}) to write%
\begin{eqnarray*}
&&\frac{\partial }{\partial p}\left( \chi _{p}\cos \left( \chi _{p}\right) -%
\tilde{b}_{p}\sin \left( \chi _{p}\right) \right) |_{\ast } \\
&=&\chi _{p}^{\prime }[\left( 1-\tilde{b}_{p}\right) \frac{\tilde{b}_{p}\sin
\left( \chi _{p}\right) }{\chi _{p}}-\chi _{p}\sin \left( \chi _{p}\right)
]-\rho c\sin \left( \chi _{p}\right) |_{\ast }.
\end{eqnarray*}%
After division by $\sin \left( \chi _{p}\right) /\chi _{p}|_{\ast }>0$, we
have, using $\tilde{b}_{p}=b+\rho cp\leq 0,$ $b\leq 0$ and again $\chi
_{p}^{\prime }|_{\ast }>0$, 
\begin{eqnarray*}
&&\chi _{p}^{\prime }[\left( 1-\tilde{b}_{p}\right) \tilde{b}_{p}-\chi
_{p}^{2}]-\rho c\chi _{p}|_{\ast } \\
&\leq &\chi _{p}^{\prime }[\left( 1-\rho cp\right) \rho cp-\chi
_{p}^{2}]-\rho c\chi _{p}|_{\ast } \\
&\leq &-\rho c\left( \chi _{p}-p\chi _{p}^{\prime }\right) |_{\ast }.
\end{eqnarray*}%
With \thinspace $-\rho c>0$, it will then be sufficient to show strict
negativity of $\chi _{p}-p\chi _{p}^{\prime }|_{\ast }$. To this end note
that the definition, $\chi _{p}^{2}=c^{2}p\left( p-1\right) -\tilde{b}^{2}$,
implies 
\begin{eqnarray*}
2\chi _{p}\chi _{p}^{\prime } &=&c^{2}\left( 2p-1\right) -2\tilde{b}\left(
\rho c\right) \\
\chi _{p}p\chi _{p}^{\prime } &=&c^{2}p\left( p-1/2\right) -\tilde{b}\left(
\rho cp\right) \\
&=&\chi _{p}^{2}+\frac{c^{2}p}{2}+b\tilde{b}>\chi _{p}^{2}
\end{eqnarray*}%
whenever $c^{2}p/2+b\tilde{b}>0$ which is surely the case upon evaluation $%
...|_{\ast }$.

We conclude that $\partial z_{0}/\partial p|_{\ast }\neq 0$, and then
validity of (\ref{detNonFocalNZ}), for any parameter set $\rho \in
(-1,0],b\leq 0,c>0,T>0$. In other words, we have completed the check of our
non-degeneracy condition.

\subsection{Comments on Heston \protect\cite{He} and Lions--Musiela 
\protect\cite{LM}}

We recall from \cite{GuSt, FGGS} that the density of log-stock price $Y_{T}$
in the Heston model, 
\begin{eqnarray*}
dY &=&-V/2+\sqrt{V}dW^{1},\,\,X\left( 0\right) =x_{0}=0 \\
dV &=&\left( a+bV\right) dt+c\sqrt{V}dW^{2},\,\,\,V\left( 0\right) =v_{0}>0,
\end{eqnarray*}%
with $a\geq 0,b\leq 0,c>0$ and correlation $\rho \in (-1,0]$ has the form%
\begin{equation*}
f\left( y\right) =e^{-c_{1}y}e^{c_{2}\sqrt{y}}y^{-3/4+a/c^{2}}\left(
c_{3}+O\left( 1/\sqrt{y}\right) \right) \text{ as }y\rightarrow \infty ;
\end{equation*}%
with explicitly computable $c_{1}=C_{1}\left( b,c,\rho ,T\right) $ and $%
c_{2}=\sqrt{v_{0}}\times C_{2}\left( b,c,\rho ,T\right) $, both do not
depend on $a$. While \textbf{scaling} with $\theta =2$, 
\begin{equation*}
Y_{\varepsilon }:=\varepsilon ^{2}Y,\,\,\,\,\,V_{\varepsilon }:=\varepsilon
^{2}V
\end{equation*}%
indeed yields a small noise problem, namely%
\begin{eqnarray*}
dY^{\varepsilon } &=&-V^{\varepsilon }/2+\sqrt{V^{\varepsilon }}\varepsilon
dW^{1},\,\,\,X\left( 0\right) =x_{0}=0 \\
dV^{\varepsilon } &=&\left( a\varepsilon ^{2}+bV^{\varepsilon }\right) dt+c%
\sqrt{V^{\varepsilon }}\varepsilon dW^{2},\,\,\,V\left( 0\right)
=v_{0}\varepsilon ^{2}>0.
\end{eqnarray*}%
The algebraic factor $y^{-3/4+a/c^{2}}$ in the above expansion then
contradicts the expected factor; cf. (\ref{densityTheta})%
\begin{equation*}
y^{\frac{1}{\theta }-1}=y^{-1/2}.
\end{equation*}%
There is no contradiction here, of course. Rather, we see an explicit
example where "formal" application of a theorem to a model which is short of
the required regularity leads to wrong conclusion (at least at the fine
level of algebraic factors). Remark that one can trace the origin of this
unexpected $y^{-3/4+a/c^{2}}$ factor to the behaviour of the one-dimensional
variance process $V;$ also known as Feller - or Cox-Ingersoll-Ross\
diffusion. Curiously then \textit{even a large deviation principle for }$%
V^{\varepsilon }$\textit{\ as given above presently lacks justification,}
despite the recent advances in \cite{Donati},~\cite{Baldi}. Clearly then, we
are not anywhere near in obtaining the Heston tail result of \cite{GuSt,
FGGS} with the present methods.

However, in the special case when $a=c^{2}/4$ it is an easy exercise to see
that the Heston model can be realized as Stein-Stein model (take $V=Z^{2}$,
where $Z$ is the volatility component of the Stein-Stein model), the
resulting expressions are then seen to be consistent with those obtained in 
\cite{FGGS} and, in particular, $y^{-3/4+a/c^{2}}=y^{-1/2}$.

\bigskip

Another class of non-smooth, non-affine stochastic vol model with $"\theta
=2"$-scaling, introduced by\ Lions-Musiela \cite{LM}. For $\delta \in \left[
1/2,1\right] $ and $\gamma =1-\delta $ they consider the $2$-dimensional
diffusion%
\begin{eqnarray*}
dY &=&-\frac{1}{2}Z^{2\delta }dt+Z^{\delta }d\tilde{W}_{1},\,\,\,Y_{0}=0 \\
dZ &=&bZdt+cZ^{\gamma }dW_{2},\,\,\,\,Z\left( 0\right) =z_{0}>0.
\end{eqnarray*}%
And indeed with $Y_{\varepsilon }=\varepsilon ^{2}Y$ and $Z_{\varepsilon
}=\varepsilon ^{1/\delta }Z$ this becomes a small noise problem;%
\begin{eqnarray*}
dY_{\varepsilon } &=&-\frac{1}{2}Z_{\varepsilon }^{2\delta
}dt+Z_{\varepsilon }^{\delta }\varepsilon dW,\,\,\,\,Y_{\varepsilon }\left(
0\right) =0 \\
dZ_{\varepsilon } &=&bZ_{\varepsilon }dt+cZ_{\varepsilon }^{\gamma
}\varepsilon dZ,\,\,Z_{\varepsilon }\left( 0\right) =\varepsilon ^{1/\delta
}z_{0}.
\end{eqnarray*}%
In their paper they establish exponential moments of $Y_{T}$. It is tempting to use corollary \ref%
{CorTail}, at least to leading large deviation order, to obtain the
exponential tail of $Z$ for models that scale with $\theta =2$. Of course,
as was discussed in the Heston case, such a "formal" application can be
wrong. Further work, building on \cite{Donati},~\cite{Baldi}, will be
necessary to deal with such degenerate models directly.

\end{document}